\documentclass[10pt,reqno]{amsart}
\usepackage{hyperref,amsmath,amsfonts,amssymb,amsthm,mathscinet,enumerate}
\usepackage[mathcal]{euscript}
\usepackage[numbers]{natbib}
\usepackage[a5paper, left=8mm, right=8mm, top=1.5cm, bottom =1cm]{geometry}
\usepackage{marginfix}
\usepackage{tikz}
\usetikzlibrary{lindenmayersystems}
\usetikzlibrary{calc}

\def\R{\mathbb{R}}
\def\N{\mathbb{N}}
\def\Z{\mathbb{Z}}
\def\d{{\fam0 d}}

\def\codim{\operatorname{codim}}
\def\dim{\operatorname{dim}}

\def\rank{\operatorname{rank}}
\def\diam{\operatorname{diam}}
\def\Id{\operatorname{Id}}
\def\dist{\operatorname{dist}}
\def\embed{\hookrightarrow}

\newtheorem{theorem}{Theorem}[section]
\newtheorem{lemma}[theorem]{Lemma}
\newtheorem{corollary}[theorem]{Corollary}
\newtheorem{proposition}[theorem]{Proposition}
\newtheorem{remark}[theorem]{Remark}

\numberwithin{equation}{section}
\expandafter\let\expandafter\oldproof\csname\string\proof\endcsname
\let\oldendproof\endproof
\renewenvironment{proof}[1][\proofname]{%
  \oldproof[\bf #1]%
}{\oldendproof}
\def\Li{{L^1}}

\def\Linf{{L^\infty}}
\def\C{{C}}

\def\Lnek{{L^\infty}}
\def\Lio{{L^1_0}}
\def\lin{{\ell^1_n}}
\def\linfn{{\ell^\infty_n}}
\def\Ldi{{L^{d,1}}}

\def\Id{\mathop{\mathrm{Id}}}
\def\vx{\mathbf{x}}
\def\vy{\mathbf{y}}

\include{widebar}
\let\bar\widebar

\begin{document}

\title[]{Strict s-numbers of non-compact Sobolev embeddings into continuous functions}

\begin{abstract}
For limiting non-compact Sobolev embeddings into continuous functions we study
behavior of Approximation, Gelfand, Kolmogorov, Bernstein and Isomorphism
$s$-numbers. In the one dimensional case the exact values of the above-mentioned
strict $s$-numbers were obtained and in the  higher dimensions sharp estimates
for asymptotic behavior of strict $s$-numbers were established. As all known
results for $s$-numbers of Sobolev type embeddings are studied mainly under
the compactness assumption then our work is an extension of existing results
and reveal an interesting behavior of $s$-numbers in the limiting case when
some of them (Approximation, Gelfand and Kolmogorov) have positive lower
bound and others (Bernstein and Isomorphism) are decreasing to zero. From our
results also follows that such limiting non-compact Sobolev embeddings
are finitely strictly singular maps.
\end{abstract}

\author[J. Lang]{Jan Lang}
\address{%
Department of Mathematics,
Ohio State University,
Columbus OH,
43210-1174 USA}
\email{lang@math.osu.edu}
\urladdr{0000-0003-1582-7273}

\author[V. Musil]{V\'\i t Musil}
\address{%
Department of Mathematics,
Ohio State University,
Columbus OH,
43210-1174 USA
---
Department of Mathematical Analysis,
Faculty of Mathematics and Physics, 
Charles University,
So\-ko\-lo\-vsk\'a~83,
186~75 Praha~8,
Czech Republic}  
\email{musil@karlin.mff.cuni.cz}
\urladdr{0000-0001-6083-227X}

\date{\today}

\subjclass[2010]{Primary 47B06, Secondary 47G10}

\keywords{Sobolev embeddings, non-compactness, $s$-numbers.}

\thanks{%
This research was partly supported
by the grant P201-13-14743S of the Grant Agency of the Czech Republic,
by the grant SVV-2016-260335 of the Charles University
and by the Neuron Fund for Support of Science.
}

\maketitle

\bibliographystyle{alpha}

\section{introduction and main results}

This paper is devoted to the study of strict $s$-numbers 
of certain type of Sobolev embedding.
As far as we know, all known results deal only with embeddings
under the assumption of compactness. Since some $s$-numbers
may be regarded as measurements of compactness, such
assumption seems to be reasonable. However not every
$s$-number has so strong connection with compactness.
For instance, Bernstein numbers represent a method of quantification
the finite strictly singularity,
the property weaker than compactness.

Let us take a look at known results. Denote by $Q$ a cube in
$\R^d$, let $1\le p < d$ and let $p\le q< \tfrac{dp}{d-p}$
or $p=d$ and $p\le q < \infty$. Then
\begin{equation*}
	b_n\bigl(V_0^1L^p(Q)\embed L^q(Q)\bigr)
		\asymp n^{-1/d}
\end{equation*}
and notice that the decay of $b_n$ is the same, regardless
how close to the limiting (i.e.\ non-compact) case we are
\citep[see][]{Ngu:15}.
Here ``$\asymp$'' means that left and right hand sides
are bounded by each other up to multiplicative constants
independent of $n$.
By $V_0^1X(Q)$, we mean the space of functions $u$ defined on $Q$
such that its continuation by zero outside $Q$ is
weakly differentiable and $|\nabla u|\in X(\Omega)$.
The norm is defined as $\|u\|_{V_0^1(Q)} = \|\nabla u\|_{X(Q)}$
and the relation ``$\embed$'' represents continuous inclusion.

This motivates us to ask the question what is
happening on this borderline. In our result, we show
that in the case when $q$ tends to infinity, the answer is in consent
with the forehead discussion.
In the situation when $q=\infty$, he proper domain space is not
$V_0^1L^d(Q)$, but the 
Sobolev-Lorentz space $V_0^1L^{d,1}(Q)$, the largest
rearrangement invariant Banach function space, which is
continuously embedded into $L^\infty(Q)$. Moreover,
every function in $V_0^1L^{d,1}(Q)$ has continuous representative,
hence
\begin{equation} \label{WC}
	V_0^1L^{d,1}(Q)\embed \C(Q).
\end{equation}
The embedding \eqref{WC} is thus an example of sharp non-compact
embedding which is the subject of our first main result.

\begin{theorem} \label{thm:ddim}
\def\SE{V_0^1L^{d,1}(Q)\embed \C(Q)}
Let $Q$ be a cube in $\R^d$, $d\ge 2$. Then for every $n\in\N$
\begin{equation} \label{mb4}
	s_n\bigl(\SE\bigr) \asymp n^{-\frac{1}{d}},
\end{equation}
where $s_n$ stands for $n$-th Bernstein 
or Isomorphism number
and
\begin{equation} \label{mb5}
	s_n\bigl(\SE\bigr) \asymp 1
\end{equation}
for $s_n$ representing Approximation, Gelfand or Kolmogorov numbers.
\end{theorem}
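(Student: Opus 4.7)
The statement splits into three independent pieces: the $\asymp 1$
bounds in \eqref{mb5}, the lower bound $b_n,i_n\gtrsim n^{-1/d}$ in
\eqref{mb4}, and the matching upper bound. The last one is the core
difficulty.

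For \eqref{mb5} the upper bound is immediate from $s_n(T)\le\|T\|<\infty$.
For the lower bound I recall that the Kolmogorov number $d_n(T)$
dominates the ball measure of non-compactness of $T$, that
$c_n(T)=d_n(T^*)$ by the classical duality for Gelfand and Kolmogorov
numbers, and that $a_n(T)\ge d_n(T)$. Non-compactness of
$V_0^1L^{d,1}(Q)\embed\C(Q)$ is classical and can be made explicit
through a concentrating family of Lipschitz bumps with unit-norm
Lorentz gradient; this shows $a_n,c_n,d_n\gtrsim 1$.

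For the lower bound in \eqref{mb4} I tile $Q$ into $n$ congruent
subcubes $Q_1,\dots,Q_n$ of side length $h=c\,n^{-1/d}$ and on each
$Q_j$ fix a smooth bump $\varphi_j$ supported in $Q_j$ with
$\|\varphi_j\|_\infty=1$ and $|\nabla\varphi_j|\lesssim 1/h$. Set
$E_n=\spn\{\varphi_j\}$. For $u=\sum c_k\varphi_k$ with
$|c_{k_1}|\ge\dots\ge|c_{k_n}|$ the disjointness of supports gives
$\|u\|_\infty=|c_{k_1}|$, and the decreasing rearrangement of
$|\nabla u|$ is a step function of height $|c_{k_j}|/h$ on a length
$\asymp h^d$, yielding
\begin{equation*}
 \|\nabla u\|_{L^{d,1}}\asymp
 d\sum_{j=1}^n|c_{k_j}|\bigl(j^{1/d}-(j-1)^{1/d}\bigr)
 \le d\,|c_{k_1}|\,n^{1/d}.
\end{equation*}
Hence $\|u\|_\infty/\|u\|_{V_0^1L^{d,1}}\gtrsim n^{-1/d}$ on all of
$E_n$ and therefore $b_n\gtrsim n^{-1/d}$. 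For the isomorphism number
the same $E_n$ realises a factorisation $BTA=\Id_{\linfn}$ via
$A(c)=\sum c_j\varphi_j$ and $Bf=(f(x_j))_j$ where $x_j$ is the peak
of $\varphi_j$; one checks $\|B\|\le 1$ and $\|A\|\lesssim n^{1/d}$,
so $i_n\gtrsim n^{-1/d}$.

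The main obstacle is the upper bound $b_n\lesssim n^{-1/d}$, from
which $i_n\le b_n\lesssim n^{-1/d}$ follows automatically. The
difficulty is that scale-invariance of the $L^{d,1}$ norm prevents
any naive point-evaluation argument from decaying: a gradient of unit
Lorentz norm concentrated in one small ball already produces
$\|u\|_\infty\asymp 1$ while vanishing at all chosen sample points.
My plan is to impose, on each subcube $Q_j$ of the same partition, a
bounded number of linear constraints on $u$ (a mix of point-values
and weighted averages detecting where in $Q_j$ the gradient
concentrates). Dimension counting shows that every $(Cn+1)$-dimensional
subspace of $V_0^1L^{d,1}(Q)$ contains a non-zero $u$ satisfying all
of them, and the sharp $V_0^1L^{d,1}(Q_j)\embed\C(Q_j)$ inequality
together with the concavity of the Lorentz functional aggregates the
constrained local estimates into the global bound
$\|u\|_\infty\lesssim n^{-1/d}\|\nabla u\|_{L^{d,1}(Q)}$. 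The design
of the local constraint family and this aggregation step constitute
the technical heart of the argument.
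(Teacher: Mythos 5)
Your treatment of \eqref{mb5} and of the lower bounds in \eqref{mb4} is correct and essentially the paper's: your cube-bump subspace plays the role of the cone functions on disjoint balls in Lemma~\ref{lemm:Wiso}, and the constant lower bounds for $a_n,c_n,d_n$ are indeed just non-compactness. The gap is the upper bound $b_n\lesssim n^{-1/d}$, which is the whole substance of the theorem, and the plan you sketch for it cannot work as described. Suppose you fix, independently of the subspace, at most $Cn$ bounded linear functionals $\lambda_1,\dots,\lambda_m$ (point values and weighted averages attached to the subcubes) with the property that every $u$ satisfying $\lambda_i(u)=0$ for all $i$ obeys $\|u\|_\infty\le C'n^{-1/d}\|\nabla u\|_{d,1}$. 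Then $M=\bigcap_i\ker\lambda_i$ has codimension at most $Cn$ and $\sup_{u\in B_M}\|u\|_\infty\le C'n^{-1/d}$, i.e.\ you would have proved $c_{Cn+1}\bigl(V_0^1L^{d,1}(Q)\embed\C(Q)\bigr)\le C'n^{-1/d}$, contradicting $c_n\asymp 1$, which is part of the very same theorem (and follows from the non-compactness you invoke elsewhere). Your own concentration example already shows how any such constraint family is defeated: a unit bump shrunk toward a point off the sample set annihilates the point evaluations exactly, and subtracting a small correction by finitely many fixed functions (solving the remaining linear equations, with coefficients tending to zero as the bump shrinks) places it exactly in $M$ while keeping $\|u\|_\infty\asymp\|\nabla u\|_{d,1}\asymp1$. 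In short, a kernel-of-constraints argument estimates Gelfand numbers, which do not decay here; a Bernstein upper bound needs a witness produced from \emph{inside} the given $n$-dimensional subspace, i.e.\ data adapted to $E$, and your proposal contains no mechanism for that.

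This is precisely what the paper's two key ingredients provide, and what any repair would have to replace. First, the Zigzag theorem (Proposition~\ref{prop:CLL}, applied after discretizing by evaluation at the centers of a fine dyadic mesh) yields, inside any $n$-dimensional subspace of $\C(Q)$, an element $v$ with $\|v\|_\infty\le 1+\varepsilon$ attaining alternating values $(-1)^j$ at $n$ mesh points. Second, ordering the dyadic cubes along the Hilbert curve makes every block of consecutive cubes a John domain with John constant depending only on $d$ (Lemma~\ref{lemm:John}), so each sign change of $v$ costs $\|\nabla v\,\chi_{\Omega_j}\|_{d,1}\gtrsim 1$ by the limiting Poincar\'e--Sobolev inequality \eqref{sobinf1}, and the superadditivity \eqref{Lorsum} of the $L^{d,1}$ norm aggregates the $n$ contributions into $\|\nabla v\|_{d,1}\gtrsim n^{1/d}$. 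This gives $b_n\lesssim n^{-1/d}$ (Lemma~\ref{lemm:WBern}), and $i_n\le b_n$ then closes \eqref{mb4} together with your (correct) lower bound.
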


The definitions of the above-mentioned numbers are recalled in
Section~\ref{subsec:snumbers}.
The deep part of this result is contained in \eqref{mb4}. In particular,
the decay of Bernstein numbers to zero
implies that \eqref{WC} is finitely strictly singular.
The relation \eqref{mb5} is, on the other hand, rather clear
and we include it just for the sake of completeness.
As a consequence of our approach, we also get \eqref{mb4}
where $L^{d,1}$ is replaced by any smaller Banach function space.

\begin{corollary} \label{cor:WXC}
Let $X(Q)$ be any Banach function space
over the cube $Q$ in $\R^d$, $d\ge 2$,
satisfying $X(Q)\embed L^{d,1}(\Omega)$.
Then for every $n\in\N$
\begin{equation*}
	s_n\bigl(V_0^1X(Q)\embed \C(Q)\bigr) \asymp n^{-\frac{1}{d}},
\end{equation*}
in which $s_n$ stands for $n$-th Bernstein or Isomorphism number.
\end{corollary}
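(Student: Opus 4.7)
The plan is to combine the sharp bounds from Theorem~\ref{thm:ddim} with the continuous inclusion $V_0^1X(Q)\embed V_0^1L^{d,1}(Q)$ that the hypothesis $X(Q)\embed L^{d,1}(Q)$ forces.

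For the \emph{upper} estimate, the hypothesis gives at once a continuous embedding $V_0^1X(Q)\embed V_0^1L^{d,1}(Q)$ (the $V_0^1$-norm depends only on the gradient), so the embedding in question factorises as $V_0^1X(Q)\embed V_0^1L^{d,1}(Q)\embed C(Q)$. The ideal property of any strict $s$-number combined with \eqref{mb4} then yields
\[
s_n\bigl(V_0^1X(Q)\embed C(Q)\bigr)\le\bigl\|V_0^1X(Q)\embed V_0^1L^{d,1}(Q)\bigr\|\cdot s_n\bigl(V_0^1L^{d,1}(Q)\embed C(Q)\bigr)\lesssim n^{-1/d}.
\]

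For the \emph{lower} estimate I would revisit the explicit subspace that drives the lower bound in Theorem~\ref{thm:ddim}. Fix a bump $\phi\in C_c^\infty\bigl((-1,1)^d\bigr)$ with $\|\phi\|_\infty=1$, split $Q$ into $n$ pairwise disjoint sub-cubes of side $r=n^{-1/d}$ with centres $x_i$, and set $\psi_i(x)=\phi\bigl((x-x_i)/r\bigr)$. Then $E_n=\spn\{\psi_1,\dots,\psi_n\}$ is an $n$-dimensional subspace of $V_0^1X(Q)$. For $u=\sum c_i\psi_i$ with $\max_i|c_i|=1$, the disjointness of supports yields $\|u\|_{C(Q)}=\|\phi\|_\infty$, while the pointwise bound $|\nabla u|\le\sum_i|\nabla\psi_i|=|\nabla v|$, where $v=\sum_i\psi_i$, together with the lattice property of $X$ gives $\|\nabla u\|_X\le\|\nabla v\|_X$. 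A direct rearrangement computation, using the scaling identity $|\nabla\psi_i|^*(s)=r^{-1}|\nabla\phi|^*(s/r^d)$ and disjointness, shows $|\nabla v|^*(t)=n^{1/d}|\nabla\phi|^*(t)$, so that $\|\nabla v\|_X=n^{1/d}\|\nabla\phi\|_X$. Consequently, for every $0\ne u\in E_n$,
\[
\frac{\|u\|_{C(Q)}}{\|u\|_{V_0^1X(Q)}}\ge\frac{\|\phi\|_\infty}{n^{1/d}\,\|\nabla\phi\|_X}\asymp n^{-1/d},
\]
which is the required lower bound for Bernstein numbers. The same $E_n$ also realises the bound for Isomorphism numbers: together with the trivial supremum estimate on $E_n$, the embedding restricted to $E_n$ is a Banach-space isomorphism onto its image with distortion of order $n^{1/d}$, exactly the strength needed.

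The principal technical point is the scaling identity $\|\nabla v\|_X=n^{1/d}\|\nabla\phi\|_X$, which must be established uniformly for every $X$ with $X\embed L^{d,1}$; this is where the rearrangement-invariance of the ambient Banach function space is invoked. Once it is in hand, the argument is a direct transcription of the proof of Theorem~\ref{thm:ddim}, with $L^{d,1}$ replaced by $X$ throughout.
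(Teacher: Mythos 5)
Your upper bound is exactly the paper's argument (factorisation $V_0^1X(Q)\embed V_0^1L^{d,1}(Q)\embed C(Q)$ plus the ideal property and \eqref{mb4}), and your lower bound is in spirit the paper's Lemma~\ref{lemm:Wiso}, which is already stated for any Banach function space $X\subseteq L^{d,1}$ and which the paper simply cites. However, as written your lower bound has two genuine gaps. First, the ``principal technical point'', the exact identity $\|\nabla v\|_X=n^{1/d}\|\nabla\phi\|_X$ via equimeasurability, requires $X$ to be \emph{rearrangement-invariant}; the corollary assumes only a Banach function space, so your argument does not cover the stated generality. The identity is also unnecessary: since the $|\nabla\psi_i|$ have disjoint supports and $|\nabla\psi_i|\le r^{-1}\|\nabla\phi\|_\infty$, the lattice property alone gives $\|\nabla v\|_X\le r^{-1}\|\nabla\phi\|_\infty\|\chi_Q\|_X = n^{1/d}\|\nabla\phi\|_\infty\|\chi_Q\|_X$, which is all you need (this is precisely why the paper uses cone functions with $|\nabla u_k|=\chi_{B_k}$). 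A cosmetic point in the same construction: with $\phi$ supported in $(-1,1)^d$ and cubes of side $r$, the supports of the $\psi_i$ overlap; take $\phi$ supported in $(-1/2,1/2)^d$, and handle general $n$ via $n=m^d$ and monotonicity (S1).

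Second, the isomorphism-number claim does not follow from what you prove. Knowing that the embedding restricted to $E_n$ is an isomorphism onto its image with distortion $O(n^{1/d})$ is, verbatim, the Bernstein lower bound, and since $i_n\le b_n$ this is strictly weaker than what is needed. To lower-bound $i_n$ you must exhibit a left inverse $A$ defined on \emph{all} of $C(Q)$, not just on $T(E_n)$. In your setting this can be repaired because the disjointly supported bumps span an isometric copy of $\ell^\infty_n$ in $C(Q)$: take $A$ to be the normalized point evaluations at the centres $x_i$ (the paper's device in Lemma~\ref{lemm:Wiso}), or extend the coordinate functionals from $T(E_n)$ by Hahn--Banach into $\ell^\infty_n$ without increasing the norm; then $A\circ T\circ B=\Id$ on $\ell^\infty_n$ with $\|A\|\le 1$ and $\|B\|\lesssim n^{1/d}$, giving $i_n\gtrsim n^{-1/d}$. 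With these two repairs your proof becomes a correct variant of the paper's, whose own proof of the corollary is just the chain $cn^{-1/d}\le i_n\le s_n(V_0^1X\embed C)\le s_n(V_0^1L^{d,1}\embed C)\le c\,n^{-1/d}$.
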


In \eqref{mb4} of Theorem~\ref{thm:ddim},
we do not focus on obtaining estimates on constants involved in ``$\asymp$'',
although they might be tracked from the proofs by careful reader.
The error is strongly dependent on the used method
and it is very unlikely to get sharp estimates.

Considering the one-dimensional case, the situation is completely
different. Using careful estimates, we obtain the exact values of
various $s$-numbers in the non-compact embedding $V_0^1L^1(I)\embed C(I)$.
The result reads as follows.

\begin{theorem} \label{thm:onedim}
\def\SE{V_0^{1,1}(I)\embed \C(I)}
Let $I$ be a bounded interval. Then for every $n\in\N$
\begin{equation} \label{mb1}
	s_n\bigl(\SE\bigr) = \frac{1}{2n},
\end{equation}
where $s_n$ stands for $n$-th Bernstein 
or Isomorphism number,
\begin{equation} \label{mb2}
	s_n\bigl(\SE\bigr) = \frac{1}{2},
\end{equation}
if $s_n$ is Approximation or Gelfand number
and for every $n\ge 2$
\begin{equation} \label{mb3}
	d_n\bigl(\SE\bigr) = \frac{1}{4},
\end{equation}
in which $d_n$ denotes the $n$-th Kolmogorov number.
\end{theorem}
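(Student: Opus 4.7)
Throughout, write $T$ for the embedding and $I = (a,b)$. The plan is to exploit the standard ordering $b_n = i_n \le c_n, d_n \le a_n \le \|T\|$ of the strict $s$-numbers, together with the duality $d_n(T) = c_n(T^*)$, and treat each kind in turn. Since $u(a)=u(b)=0$ forces $|u(x)| \le \min\bigl(\int_a^x|u'|,\int_x^b|u'|\bigr) \le \tfrac12\|u'\|_1$, with equality approached by narrow tent functions, $\|T\| = \tfrac12$; this gives the upper bounds in \eqref{mb2} and, through the above chain, bounds the Kolmogorov, Bernstein and Isomorphism numbers by $\tfrac12$.

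For the Gelfand lower bound in \eqref{mb2}, I represent $n-1$ functionals on $V_0^{1,1}(I)$ by densities $g_1,\ldots,g_{n-1} \in L^\infty(I)$ via $\ell_i(u) = \int g_i u'$, select a common Lebesgue point $t \in I$ of the $g_i$'s, and take narrow symmetric tent functions $\tau_k$ centered at $t$ with $\|\tau_k'\|_1 = 1$ and $\|\tau_k\|_\infty = \tfrac12$. The Lebesgue-point property forces $\ell_i(\tau_k) \to 0$; a correction of norm $o(1)$ drags $\tau_k$ into the codim-$(n-1)$ subspace without affecting the ratio, yielding $c_n \ge \tfrac12$ and therefore $a_n = c_n = \tfrac12$.

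For the Kolmogorov numbers \eqref{mb3}, the upper bound $d_n \le d_2 \le \tfrac14$ is realized by $N = \operatorname{span}\{\chi_I\}$: the boundary vanishing forces the total variation $\|u'\|_1 \ge 2(\max u - \min u)$ (a signed path from $0$ through the extrema and back), so centering by $\lambda = \tfrac12(\max u + \min u)$ gives $\|u-\lambda\|_\infty = \tfrac12(\max u - \min u) \le \tfrac14$. For the lower bound I pass to $T^*: M(I) \to L^\infty(I)/\R$, computed as $T^*\mu = G_\mu$ with $G_\mu(s) = \mu((s,b])$ and norm $\tfrac12\operatorname{osc}(G_\mu)$. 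The dipole $\tfrac12(\delta_t - \delta_{t'})$ realizes the ratio $\tfrac14$ exactly; given any codim-$(n-1)$ subspace of $M(I)$, I add a correction supported away from $\{t,t'\}$ to meet the $n-1$ linear constraints and then let $t' \to t$, so that by continuity of the constraint functionals the correction's mass vanishes, giving $c_n(T^*) \ge \tfrac14$.

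The main obstacle is the Bernstein upper bound $b_n \le 1/(2n)$. The matching lower bound is immediate on the span of $n$ equal-height tent functions on disjoint subintervals of length $|I|/n$: one reads $\|u\|_\infty = h\max_k|c_k|$ and $\|u'\|_1 = 2h\sum_k|c_k|$, with worst-case ratio $1/(2n)$, and the same subspace factors $T|_M$ through $\lin \embed \linfn$ to yield the matching Isomorphism bound. For the upper bound, given any $n$-dimensional $M$, I seek interior nodes $a = x_0 < x_1 < \cdots < x_{n-1} < x_n = b$ and a nonzero $u \in M$ with $u(x_i)=0$ and equidistributed variation $\int_{x_{i-1}}^{x_i}|u'| = \|u'\|_1/n$; the endpoint-zero inequality applied on each subinterval then yields $\|u\|_\infty \le 1/(2n)$. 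Existence of such balanced nodes is the heart of the matter: the map $(x_1,\ldots,x_{n-1}) \mapsto (v_1,\ldots,v_n)$, with $v_i = \|u_x'\|_{L^1(x_{i-1},x_i)}/\|u_x'\|_1$ and $u_x$ spanning the (generically one-dimensional) kernel in $M$ of the point evaluations at $x_1,\ldots,x_{n-1}$, sends the open simplex of ordered tuples continuously into the standard simplex in $\R^n$ with face-preserving boundary behavior (collapsing adjacent nodes sends the corresponding coordinate to zero). A Brouwer/degree argument then forces the image to contain the barycenter $(1/n,\ldots,1/n)$, completing the proof.
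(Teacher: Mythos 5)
Your norm computation, the tent-function lower bounds for the Bernstein and Isomorphism numbers, the Lebesgue-point argument for $c_n\ge\tfrac12$ (a genuinely different and arguably simpler route than the paper's Lemma~\ref{lemm:VGelf}), and the Kolmogorov upper bound via constants are all sound. The two lower-level problems are in the steps you yourself single out as delicate. First, the Bernstein upper bound: your Brouwer/degree argument requires a \emph{continuous} (projective) selection $x\mapsto u_x$ of a spanning element of $\ker\bigl(u\mapsto(u(x_1),\dots,u(x_{n-1}))\bigr)\cap M$ on the closed simplex of node configurations. When the point evaluations become linearly dependent on $M$ the kernel jumps dimension and no such selection exists; this is not a negligible degeneracy (take $M$ spanned by $n$ tents supported in a small portion of $I$: the evaluations vanish identically at every node outside the supports, so the kernel is large on a set of configurations with nonempty interior), and the faces of the simplex, where adjacent nodes collide and constraints coalesce, are precisely where the degeneracy is forced. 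Since the whole theorem hinges on $b_n\le\tfrac1{2n}$, the existence of the equioscillating/equidistributed element is the heart of the matter; the paper obtains it by importing the Zigzag Theorem of Chalendar et al.\ (Proposition~\ref{prop:CLL}) applied to $V(E)\subseteq C(I)$, which is exactly the ingredient your sketch would have to replace by a complete selection-plus-degree argument (or a perturbation scheme handling the degenerate configurations), none of which is supplied.

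Second, the Kolmogorov lower bound. The subspace of codimension $<n$ in the Gelfand number of $T^*$ is cut out by arbitrary elements of $M(\overline I)^*$, which are \emph{not} weak*-continuous; the dipole $\tfrac12(\delta_t-\delta_{t'})$ has norm $1$ for all $t'\ne t$ and does not become small in any topology these functionals respect (consider $\Psi(\mu)=\mu(\{t\})$, for which $\Psi\bigl(\tfrac12(\delta_t-\delta_{t'})\bigr)=\tfrac12$ for every $t'$, and which cannot be killed by a correction supported away from $t$). So the step ``by continuity of the constraint functionals the correction's mass vanishes'' fails as stated. Note also that the duality you invoke as an equality, $d_n(T)=c_n(T^*)$, is in general only available for compact $T$; what holds for free is $c_n(T^*)\le d_n(T)$, which fortunately is the direction you need. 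The argument can be repaired without limits: any dipole already realizes the ratio $\tfrac14$ exactly, so it suffices to pick, by pigeonhole among uncountably many points, two points $t\ne t'$ at which the finite vector $\bigl(\Psi_i(\delta_t)\bigr)_i$ nearly coincides with $\bigl(\Psi_i(\delta_{t'})\bigr)_i$, and then correct with a fixed biorthogonal system of measures of small coefficients. Alternatively, the paper avoids duality altogether and gets $d_n\ge\tfrac14$ on the primal side by a concentration argument (Lemma~\ref{lemm:lowKolm}) with functions whose images take the values $0$ and $\tfrac12$ at points merging to a single point, which a fixed finite-dimensional subspace of continuous functions cannot follow.
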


The asymptotic behaviour of involved $s$-numbers are in the correspondence
with the above-mentioned vision.  Furthermore, we would like to notice the
difference between Gelfand and Kolmogorov numbers. This in particular
implies that the space $V_0^{1,1}(I)$, consisting of absolute continuous
functions with zero boundary condition, does not have the lifting property
\citep[cf.][p.~36]{Pin:85}.

Let us now briefly comment the method of proof.
The crucial ingredient of the proof of \eqref{mb1}
is the following proposition based on the deep result
\citep[Theorem~1]{Cha:09}.

\begin{proposition} \label{prop:CLL}
Let $E$ be a $n$-dimensional subspace of $\C(I)$ where $I$ is any
bounded interval. Then to every
$\varepsilon>0$
there exist a function $g\in E$, $\|g\|_\infty \le 1+\varepsilon$,
and $n$-tuple of points $t_1<t_2<\dots<t_n$ in $I$ such that
\begin{equation*}
	g(t_k) = (-1)^k
		\quad\text{for $1\le k\le n$}.
\end{equation*}
\end{proposition}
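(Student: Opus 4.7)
The plan is to invoke \citep[Theorem~1]{Cha:09}, whose content is that every $n$-dimensional subspace $E\subset \C(I)$ admits, for each $\delta>0$, a function $f\in E$ with $\|f\|_\infty\le 1$ together with $n$ points $s_1<\dots<s_n$ in $I$ and signs $\eta_k\in\{\pm 1\}$, alternating in $k$, for which $\eta_k f(s_k)\ge 1-\delta$. Passing from $f$ to $-f$ if necessary we may assume $\eta_k=(-1)^k$; then $f$ already alternates approximately, with error at most $\delta$ at the chosen points.

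The next task is to upgrade approximate alternation to exact alternation at the same points while keeping the sup-norm close to $1$. Consider the evaluation map $T\colon E\to\R^n$, $T(h)=(h(s_1),\dots,h(s_n))$, between two $n$-dimensional spaces. I would argue that $T$ is an isomorphism: if a nonzero $h\in E$ vanished at all $s_k$, then for small $\lambda$ the function $f+\lambda h$ would retain the approximate alternation of $f$ at the $s_k$, yet by the intermediate value theorem applied inside each gap $(s_{k-1},s_k)$ it would exhibit additional sign changes, contradicting the maximal alternation count afforded by Cha's theorem. Granted invertibility of $T$, define $g\in E$ by $T(g)=((-1)^k)_{k=1}^n$ and set $t_k=s_k$, so that $g(t_k)=(-1)^k$ by construction.

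It remains to establish the sup-norm bound $\|g\|_\infty\le 1+\varepsilon$. The difference $g-f$ lies in $E$ and satisfies $(g-f)(s_k)=(-1)^k-f(s_k)$, which is of modulus at most $\delta$; hence $\|g-f\|_\infty\le\|T^{-1}\|\,\delta$. I expect the main obstacle to lie precisely here, in obtaining a quantitative grip on $T^{-1}$: the alternation configuration furnished by Cha's theorem should make the interpolation operator well-conditioned in a way that depends only on $n$ and the Chebyshev-like data, and once such a bound is in hand one applies Cha's theorem a second time with a sufficiently small $\delta$ (adjusted in terms of that bound) to ensure $\|g-f\|_\infty\le\varepsilon$, whence $\|g\|_\infty\le\|f\|_\infty+\varepsilon\le 1+\varepsilon$ as required.
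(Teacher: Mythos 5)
There is a genuine gap here, at two levels. First, the input you attribute to \citep[Theorem~1]{Cha:09} is not what that theorem says: the Zigzag Theorem is a statement about $n$-dimensional subspaces of the finite sequence space $\ell^\infty_N$, where it already produces \emph{exact} alternation ($x_{k_j}=(-1)^j$ with sup-norm one); it makes no direct assertion about subspaces of $\C(I)$, so your approximate-alternation starting point is itself something that must be proved (essentially by the discretization you are trying to bypass). Second, and more seriously, the upgrade from approximate to exact alternation does not work as written. The evaluation map $T(h)=(h(s_1),\dots,h(s_n))$ need not be injective: a general $n$-dimensional subspace of $\C(I)$ is not a Haar system, and nothing about the approximate alternation of $f$ prevents a nonzero $h\in E$ from vanishing at all the $s_k$. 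Your contradiction invokes a ``maximal alternation count afforded by Cha's theorem,'' but that theorem gives only a lower bound (existence of $n$ alternation points), not an upper bound on sign changes of elements of $E$; moreover a perturbation $f+\lambda h$ with $h(s_k)=0$ need not acquire any extra sign changes at all. Finally, even granting invertibility, the quantitative step fails: $\|T^{-1}\|$ cannot be bounded in terms of $n$ and the alternation data alone, since $E$ may contain a function of sup-norm one whose values at $s_1,\dots,s_n$ are as small as one likes; and since the points $s_k$ (hence $T$) are produced only \emph{after} $\delta$ is chosen, the plan to ``apply the theorem again with $\delta$ adjusted in terms of that bound'' is circular.

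The paper's argument goes the other way around and avoids both difficulties: by Arzel\`a--Ascoli the unit ball of the finite-dimensional $E$ is equicontinuous, so evaluation on a sufficiently fine finite net $s_1<\dots<s_N$ gives $\Phi\colon E\to\ell^\infty_N$ with $(1-\varrho)\|f\|_\infty\le\|\Phi f\|_\infty\le\|f\|_\infty$; the Zigzag Theorem is then applied to the $n$-dimensional subspace $\Phi(E)$ of $\ell^\infty_N$, where it delivers the exact values $(-1)^j$ at $n$ net points, and the only loss is the norm bound $\|g\|_\infty\le(1-\varrho)^{-1}\le 1+\varepsilon$. That discretization step is exactly the ingredient missing from your outline.
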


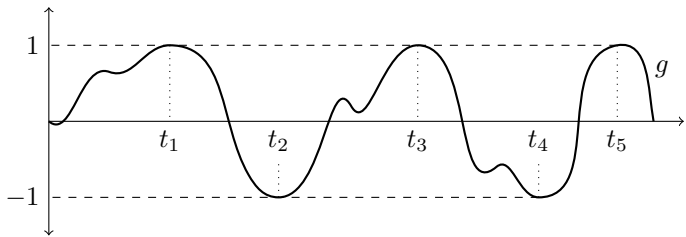
\begin{figure}[ht]
\begin{tikzpicture}[x=0.8cm,y=1cm]
\coordinate (A1) at (0,0);
\coordinate (A2) at (1,0.65);
\coordinate (A3) at (2,1);
\coordinate (A4) at (3.3,-0.7);
\coordinate (A5) at (3.8,-1);
\coordinate (A6) at (5,0.2);
\coordinate (A7) at (6.1,1);
\coordinate (A8) at (7.4,-0.6);
\coordinate (A9) at (8.1,-1);
\coordinate (A10) at (9.4,1);
\coordinate (A11) at (10,0);
\draw[thick]
	(A1) to[out=-40,in=180-20]
	(A2) to[out=-20,in=180]
	(A3) to[out=0,in=180-60]
	(A4) to[out=-60,in=180]
	(A5) to[out=0,in=180-60]
	(A6) to[out=-60,in=180]
	(A7) to[out=0,in=180+40]
	(A8) to[out=40,in=180]
	(A9) to[out=0,in=180+10]
	(A10) to[out=10,in=180-80] node[right]{$g$}
	(A11);
\draw[->] (A1)--(10.5,0);
\draw[<->] (0,-1.5) -- (0,1.5);
\draw[dotted] (A3) -- (A3|- 0,0) node[below] {$t_1$};
\draw[dotted, shorten >=5mm] (A5) -- (A5|- 0,0) node[below] {$t_2$};
\draw[dotted] (A7) -- (A7|- 0,0) node[below] {$t_3$};
\draw[dotted, shorten >=5mm] (A9) -- (A9|- 0,0) node[below] {$t_4$};
\draw[dotted] (A10) -- (A10|- 0,0) node[below] {$t_5$};
\draw[dashed] (A10) -- (A10-| 0,0) node[left] {$1$};
\draw[dashed] (A9) -- (A9-| 0,0) node[left] {$-1$};
\end{tikzpicture}
\caption{Every $n$-dimensional subspace of $\C(I)$ contains $n$ times
``zigzaging'' function.}
\end{figure}

The proof of \eqref{mb1} also relies on the linear
ordering of the domain set $I$.
In order to prove \eqref{mb4} we transfer this idea
into the higher dimension by using the approximation of
space-filling curve, namely the Hilbert
curve (see Figure~\ref{fig1}), for which we show that it
``preserves locality'' in some sense.

\pgfdeclarelindenmayersystem{Hilbert curve}{
  \rule{L -> +RF-LFL-FR+}
  \rule{R -> -LF+RFR+FL-}}

\begin{figure}[ht]
\begin{tikzpicture}[x=2.8cm,y=2.8cm]
		\newcommand*{\Order}{4}
		\newcommand*{\Step}{2.8cm}
		\foreach \i in {1,...,\Order}{
      \begin{scope}[xshift=1.2*\i*\Step,rotate=0,scale=1/(2^\i)]
			\draw[l-system={Hilbert curve, axiom=L, order=\i, step=\Step},
				shift={(0.5,0.5)}, thick]
				lindenmayer system;
			\draw[gray, thin, step=\Step ]
				(0,0) grid ({2^\i},{2^\i});
			\end{scope}}
\end{tikzpicture}
\caption{First four approximations of Hilbert curve in plane ($\mathcal H^2_1$ to $\mathcal H^2_4$).}
\label{fig1}
\end{figure}
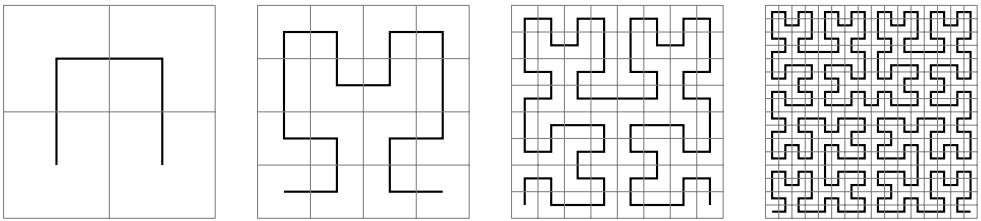

The paper is structured as follows. In Section~\ref{sec:back}
we recall the definitions we use and we collect all necessary later-needed
material. Section~\ref{sec:pre} is devoted to some preliminary
results including the proof of Proposition~\ref{prop:CLL},
Section~\ref{sec:onedim} deals with the one-dimensional
case contained in Theorem~\ref{thm:onedim}
and Section~\ref{sec:fulldim} treats the higher-dimensional
situation collected in Theorem~\ref{thm:ddim}.

\section{background material} \label{sec:back}

\subsection{Banach spaces}

For Banach spaces $X$ and $Y$, we denote by $B(X,Y)$ the set of all
bounded linear operators acting between $X$ and $Y$. For any $T\in B(X,Y)$, we
use just $\|T\|$ for its operator norm, since the domain and target spaces are
always clear from the context.  By $B_X$ and $S_X$, we mean the closed unit
ball of $X$ and unit sphere of $X$, respectively.

Let $Z$ be a closed subspace of the Banach space $X$. The quotient space $X/Z$
is the collection of the sets $[x] = x+Z = \{x+z;\,z\in Z\}$ equipped with the
norm
\begin{equation*}
	\| [x] \|_{X/Z} = \inf\{ \|x-z\|_X;\,z\in Z\}.
\end{equation*}
We sometimes adopt the notation $\|x\|_{X/Z}$ when no confusion is likely to
happen.  Recall the notion of canonical map $Q_Z\colon X\to X/Z$, given by
$Q_Z(x)=[x]$.

Let $Z$ be a closed subspace of a Banach space $X$.
Then
\begin{equation} \label{dual}
	Z^* \text{ is isometric to } X^*/Z^\perp,
\end{equation}
where
$Z^\perp = \{\phi\in X^*;\, \phi(z)=0 \text{ for all $z\in Z$}\}$
\citep[see][Proposition 2.6]{Fab:11}.

\subsection{The $s$-numbers} \label{subsec:snumbers}

Let $X$ and $Y$ be Banach spaces.
To a given operator $T \in B(X, Y)$, one can assign a scalar sequence $s_n(T)$,
$n\in\N$, satisfying for every $n\in\N$ the following conditions
\begin{enumerate}[(S1)]
	\item $\| T \| = s_1(T) \geq s_2 (T) \geq \dots \geq 0$,
 (monotonicity),
	\item $s_n(T + S) \leq s_n (T) + \|S\|$ for every $S \in B(X, Y)$,
	\item $s_n(B\circ T\circ A) \leq \| B \| s_n(T) \| A \|$
		for every $A \in B(X_1, X)$ and $B \in B(Y,Y_1)$,
 (ideal property),
	\item $s_n( \Id\colon \ell_n^2 \rightarrow \ell_n^2) = 1$,
 (norming property),
	\item $s_n(T) = 0$ whenever $\rank T< n$,
 (rank property).
\end{enumerate}
The number $s_n(T)$ is then called the $n$-th $s$-number of the operator $T$.
When (S4) is replaced by a stronger condition
\begin{enumerate}[(S1)] \setcounter{enumi}{5}
	\item $s_n( \Id\colon E\to E) = 1$ for every Banach space $E$, $\dim E = n$,
\end{enumerate}
we say that $s_n(T)$ is the $n$-th \textbf{strict} $s$-number of $T$. 
We also adopt the notation $s_n(T\colon X\to Y)$,
or $s_n(X\embed Y)$ when $T$ is identity, to emphasize the spaces
involved.

Note that the definition of $s$-numbers is not unified in literature.  The
original definition of $s$-numbers, which was introduced by Pietsch in
\citep{Pie:74}, uses the condition (S6) which was later modified to accommodate
wider class of $s$-numbers (like Weyl, Chang and Hilbert numbers).
Some authors also use more strict version of (S2).
For a detailed account of $s$-numbers, one is referred for instance to
\cite{Pie:07}, \cite{Car:90} or \citep{Lan:11}.

We shall recall some particular strict $s$-numbers.
Let $T \in B(X,Y)$ and $n \in\N$. The $n$-th Approximation, Gelfand,
Kolmogorov, Isomorphism 
and Bernstein numbers of $T$ are defined by
\begin{align*}
a_n(T) & = \inf_{\substack{ F \in B(X,Y) \\ \rank F < n}}
\| T-F \|,
	\\
c_n(T) & = \inf_{\substack{ M \subseteq X \\ \codim M < n}}
	\sup_{x\in B_M}  \| Tx  \|_Y,
	\\
d_n(T) & = \inf_{\substack{N \subseteq Y \\ \dim N < n}}
	\sup_{x\in B_X} \| Tx \|_{Y/N},
	\\
i_n(T) & = \sup  \| A \|^{-1} \| B \|^{-1},
\intertext{where the supremum is taken over all Banach spaces $E$ with $\dim E \geq n$ and
$A \in B(Y, E)$, $B \in B(E, X)$ such that $A\circ T\circ B$ is the identity
map on $E$, and}
	b_n(T) & = \sup_{\substack{M \subseteq X \\ \dim M \ge n}}
		\inf_{x \in S_M} \| Tx \|_Y,
\end{align*}
respectively.
It is known that Approximation numbers are the largest $s$-numbers
while the Isomorphism numbers are the smallest among all strict $s$-numbers.
Let us mention that for any $T\in B(X,Y)$ and any $n\in\N$ we have
\begin{equation*}
	i_n(T) \le 
		b_n(T)
		\le \max \{ c_n(T), d_n(T) \}
		\le a_n(T).
\end{equation*}
Moreover, if $X$ has lifting property, then $a_n(T)=d_n(T)$ for every $n\in\N$
\citep[see][]{Pie:07,Lan:11}.

\subsection{Function spaces}

Given an open set $\Omega\in\R^d$, we denote by $C(\Omega)$
the space of all continuous functions on $\bar \Omega$
equipped with the supremum norm.

We will, at some stage, use the setting of so-called Banach function spaces.
This wide
class of function spaces provides handy tools commonly used
in study of Sobolev embeddings. Roughly speaking, a Banach function
space $X(\Omega)$ over an measurable set $\Omega\subseteq \R^d$ is a collection of
all real-valued measurable functions, for which the functional
$\|\cdot\|_{X(\Omega)}$
is finite, where $\|\cdot\|_{X(\Omega)}$ is a Banach function norm
satisfying given set of axioms.
We also sometimes write $X$ instead of $X(\Omega)$ for brevity.
We kindly refer the reader who is not familiar with this notion
to the monograph \citep[Chapter 1 and 2]{Ben:88}.

Basic examples of Banach function spaces are Lorentz spaces
$L^{p,q}(\Omega)$,
where the norm is given by
\begin{equation*}
	\|f\|_{p,q} = 
		\biggl(
			p \int_{0}^{\infty} \bigl[\mu_f(s)\bigr]^{\frac{q}{p}} s^{q-1}\,\d s
		\biggr)^{\frac{1}{q}}.
\end{equation*}
Here, $\mu_f$ is the distribution function of $f$, defined as
\begin{equation*}
	\mu_f(t) = \bigl| \bigl\{ x\in\Omega;\, |f(x)| > t \bigr\}\bigr|
		\quad\text{for $0\le t<\infty$}.
\end{equation*}
We consider only the case $1\le q\le p$. When $p=q$, then $L^{p,p}=L^p$,
the customary Lebesgue space.
Recall that if $q\le p$ then
\begin{equation} \label{Lorsum}
	\sum_{k} \| f \chi_{E_k} \|_{p,q}^p \le \| f \|_{p,q}^p
\end{equation}
for any $f\in L^{p,q}(\Omega)$ and any pairwise disjoint measurable
subsets $E_k$ of $\Omega$.

Let $X(\Omega)$ be a Banach function space over $\Omega\subseteq \R^d$.
By Sobolev space $V_0^1 X(\Omega)$ we mean the collection of
all functions $u$ such that their continuation by zero outside $\Omega$
is weakly differentiable in $\R^d$ and satisfy $|\nabla u|\in X(\Omega)$.
On $V_0^1 X(\Omega)$ we use the norm
\begin{equation*}
	\|u\|_{V_0^1 X(\Omega)} = \|\nabla u\|_{X(\Omega)}.
\end{equation*}
When $X(\Omega)=L^p(\Omega)$, we use the abbreviate notation
$V_0^{1,p}(\Omega)$.
Recall that if $|\Omega|<\infty$ then, thanks to Poincar\'e
inequality, $\|u\|_{V_0^1X(\Omega)}$
is equivalent to the $\|u\|_{X(\Omega)}+\|\nabla u\|_{X(\Omega)}$,
the usual norm in Sobolev spaces denoted by $W_0^1X(\Omega)$.

\subsection{John domains and Sobolev embeddings}

We say that a bounded domain $\Omega\subseteq \R^d$ is a John domain
if there is a constant $C_J\ge 1$ and a distinguished point $x_0\in\Omega$ (called central point)
so that each point $x\in\Omega$ can be joined to $x_0$ by a curve (called John curve)
$\gamma\colon[0,1]\to \Omega$ such that $\gamma(0)=x$, $\gamma(1)=x_0$ and
\begin{equation} \label{defjohn}
	\dist\bigl(\gamma(t),\partial\Omega\bigr) \ge C_J^{-1} |x-\gamma(t)|
\end{equation}
for every $t\in[0,1]$.

The John domains provides a wide class of domains for which the
classical Poincar\'e inequality hold. We restrict our attention only to
the limiting case
\begin{equation*}
	\|u-u_\Omega\|_{\infty(\Omega)} \le C \|\nabla u\|_{d,1(\Omega)},
\end{equation*}
where we want to control the size of the constant $C$ appearing here.
Here, $u_\Omega$ represents the integral average of $u$ over $\Omega$.
It could be shown that $C$ depends on $C_J$, the John constant of
the domain $\Omega$, and its dimension $d$. However, this result seems
not to be available in the literature hence we sketch the proof here.

Let us come out of the potential estimate
obtained independently in \citep{Mar:88} and \citep{Res:80}
(see also \citep{Boj:88}) which reads as follows.
Suppose that $\Omega\subseteq \R^d$ is a John domain. Then
\begin{equation} \label{potentialest}
	|u(x) - u_\Omega|
		\le C(C_J,d) \int_{\Omega} \frac{|\nabla u(z)|}{|x-z|^{d-1}}\,\d z
\end{equation}
for every locally Lipschitz function $u$ in $\Omega$ and all $x\in\Omega$.
Assuming that $\nabla u\in L^{n,1}(\Omega)$, we can follow on the right hand
side of \eqref{potentialest} by
\begin{align*}
	\int_\Omega \frac{|\nabla u(z)|}{|x-z|^{d-1}}\,\d z
		& = \int_{0}^{\infty} \int_{\{|\nabla u|>s\}} |x-z|^{1-d} \,\d z\, \d s
			\cr
		& \le \int_{0}^{\infty}
				\bigl[\mu_{|\nabla u|}(s)\bigr]^{\frac{1}{d}}\, \d s
			= \frac{1}{d} \|\nabla u \|_{d,1(\Omega)}
\end{align*}
One can thus show that every weakly differentiable $u$ such that
$\nabla u\in L^{d,1}(\Omega)$ has
continuous representative which satisfies
\begin{equation} \label{sobinf1}
	\sup_{x,y\in\Omega} |u(x)-u(y)| \le C(C_J,d) \|\nabla u\|_{d,1(\Omega)}.
\end{equation}

For a different approach, where $C$ depends on the constants
in corresponding isoperimetric inequalities, we refer to \citep{Cia:98a}.

\section{Preliminaries} \label{sec:pre}

Throughout this section, assume that $I$ is any bounded interval.
In one-dimensional case, we can reduce the $s$-numbers of Sobolev embeddings
to the $s$-numbers of an integral operator. This is the objective of
the following proposition.

\def\DS{V_0^{1,1}}
\begin{proposition} \label{prop:WV}
Let $I$ be a bounded interval. Then for every $n\in\N$
\begin{equation*}
	s_n \bigl(\DS(I)\embed \C(I)\bigr)
		=	s_n \bigl( V\colon L^1_0(I)\to \C(I)\bigr),
\end{equation*}
where
\begin{equation*}
	Vf(t) = \int_{I\cap(-\infty,t)} f(s)\,\d s
		\quad\text{for $t\in I$}
\end{equation*}
and
$L^1_0(I)$ is a subspace of $L^1(I)$ given by
\begin{equation*}
	L^1_0(I) = \bigl\{ f\in L^1(I); \textstyle\int_I f =0 \bigr\}.
\end{equation*}
\end{proposition}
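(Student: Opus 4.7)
The plan is to identify $V_0^{1,1}(I)$ with $L^1_0(I)$ via differentiation and then invoke the ideal property (S3) of $s$-numbers.

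Write $I=(a,b)$. First I would verify that $V_0^{1,1}(I)$ consists precisely of the absolutely continuous functions $u$ on $\bar I$ with $u(a)=u(b)=0$, equipped with the norm $\|u\|_{V_0^{1,1}(I)}=\|u'\|_{L^1(I)}$. The boundary values arise because the zero extension of $u$ to $\R$ is weakly differentiable, which forces continuity across $a$ and $b$ and hence $u(a)=u(b)=0$. Conversely, any absolutely continuous $u$ with these boundary values extends by zero to a weakly differentiable function on $\R$.

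Next I would introduce the differentiation map
\begin{equation*}
	D\colon V_0^{1,1}(I)\to L^1_0(I),\qquad Du=u'.
\end{equation*}
The boundary condition $u(a)=u(b)=0$ gives $\int_I u'=0$, so $D$ maps into $L^1_0(I)$; it is isometric by the definition of the norm on $V_0^{1,1}(I)$; and it is surjective with inverse given exactly by $V$, since for $f\in L^1_0(I)$ the function $Vf(t)=\int_a^t f$ is absolutely continuous and satisfies $Vf(a)=0$ and $Vf(b)=\int_I f=0$. Hence $D$ is an isometric isomorphism, $\|D\|=\|D^{-1}\|=1$, and $D^{-1}=V$ when the target is read as a subspace of $\C(I)$.

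With this in hand, the canonical embedding $i\colon V_0^{1,1}(I)\embed \C(I)$ factors as
\begin{equation*}
	i = V\circ D\qquad\text{and equivalently}\qquad V = i\circ D^{-1}.
\end{equation*}
Applying the ideal property (S3) to both factorizations, together with $\|D\|=\|D^{-1}\|=1$, yields $s_n(i)\le s_n(V)$ and $s_n(V)\le s_n(i)$, so $s_n(i)=s_n(V)$ for every $n\in\N$.

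I do not anticipate a real obstacle: the substantive point is the characterization of $V_0^{1,1}(I)$ (in particular the vanishing at the endpoints forced by weak differentiability of the zero extension), after which the factorization through the isometric isomorphism $D$ and the ideal property of strict $s$-numbers close the argument uniformly for every $s_n$ in the list.
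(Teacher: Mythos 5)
Your proposal is correct and follows essentially the same route as the paper: the differentiation map $D$ you introduce is exactly the paper's $V^{-1}$, and both arguments obtain the two inequalities by factoring the embedding through this isometric isomorphism and applying the ideal property (S3) with $\|V\|=\|V^{-1}\|=1$. Your additional verification of the endpoint characterization of $V_0^{1,1}(I)$ and of $Vf(b)=0$ just makes explicit what the paper leaves implicit.
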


\begin{proof}
To prove ``$\le$'', consider the composition
\begin{equation*}
	L^1_0 \xrightarrow{V} \DS
		\embed \C.
\end{equation*}
By the ideal property (S3) of $s$-numbers,
\begin{equation*}
	s_n(V\colon L^1_0 \to \C)
		\le \|V\| s_n(\DS\embed \C)
\end{equation*}
and $\|V\|=1$ since
$\|Vf\|_{\DS} = \|f\|_{1}$
for every $f\in L^1_0$.

To prove the reverse inequality, we use the chain
\begin{equation*}
	\DS \xrightarrow{V^{-1}}	L^1_0
		\xrightarrow{V} \C.
\end{equation*}
Note that $V$ is one-to-one
mapping from $L^1_0$ onto $\DS$, $V^{-1}$ is well-defined
and thus
\begin{equation*}
	s_n(\DS\embed \C)
		\le s_n(V\colon L^1_0 \to \C) \|V^{-1}\|.
\end{equation*}
Next
$\|V^{-1}u\|_{L^1_0} = \|u'\|_{L^1_0} = \|u\|_{\DS}$,
whence $\|V^{-1}\|=1$.
\end{proof}

\begin{remark}
As a consequence of Proposition~\ref{prop:WV}, we have
the exact value of the norm of the embedding
$V_0^{1,1}(I) \embed \C(I)$, namely
\begin{equation*}
	\sup_{f\in V_0^{1,1}(I)} \frac{\|f\|_{\infty}}{\|f'\|_{1}}
		= \|V\| = \frac12.
\end{equation*}
\end{remark}

\begin{proof}[Proof of Proposition~\ref{prop:CLL}]
The unit ball $B_E$ is compact
and, by the Arzel\`a-Ascoli theorem,
equi-continuous. Thus, to a given $\varrho\in(0,1)$
there is a $\delta>0$ such that
\begin{equation} \label{Eequiint}
	\sup_{f\in B_E} |f(u)-f(v)| < \varrho
\end{equation}
for any $u,v \in[0,1]$ satisfying $|v-u|<\delta$.
To this $\delta$, choose points $s_k$ in $I$,
such that $s_0 < s_1 < \dots < s_{N+1}$
and $s_{k+1}-s_k < \delta$, $(0\le k\le N)$. We also require $N \ge n$.

Now, define a linear mapping $\Phi\colon E \to \ell^\infty_N$ by
\begin{equation*}
	(\Phi f )_k = f(s_k),
		\quad (1\le k\le N),
		\quad\text{for $f\in E$.}
\end{equation*}
We show that
\begin{equation} \label{Ephibound}
	(1-\varrho) \| f \|_\infty \le \|\Phi f \|_\infty \le \| f \|_\infty
	\quad\text{for $f\in E$.}
\end{equation}
The latter inequality is obvious.
As for the former, suppose that $f\in E$ is given
and attains its norm at some point $s\in I$. To such $s$,
there is a unique $k$ such that $s_k \le s < s_{k+1}$, whence
\begin{equation*}
	\|f\|_\infty - |f(s_k)|
		= |f(s)| - |f(s_k)|
		\le |f(s) - f(s_k)|
		\le \varrho \|f\|_\infty,
\end{equation*}
thanks to \eqref{Eequiint}. The inequality \eqref{Ephibound} therefore
follows by taking the supremum over $1\le k \le N$.
By \eqref{Ephibound}, $\Phi$ maps
$E$ onto $\Phi(E)$ isomorphically, whence $\Phi(E)$ forms a $n$-dimensional
subspace in $\ell^\infty_N$. By the Zigzag Theorem
\citep[Theorem 1]{Cha:09}, there is
an element $g\in E$ and indices $1\le k_1<k_2<\dots<k_n\le N$
such that $\|\Phi g\|_\infty = 1$ and
\begin{equation*}
	g(s_{k_j}) = (-1)^j
		\quad\text{for $1\le j\le n$}.
\end{equation*}
Also, by \eqref{Ephibound},
\begin{equation*}
	\|g\|_\infty \le \frac{1}{1-\varrho} \le 1+\varepsilon
\end{equation*}
for $\varrho$ taken sufficiently small.
The proposition now follows by taking $t_j = s_{k_j}$
for $1\le j\le n$.
\end{proof}

\begin{lemma} \label{lemm:lowKolm}
Let $T\colon X(I)\to \C(I)$ be any bounded linear operator acting on Banach
function space
$X(I)$ over an interval $I$
taking its values in continuous functions. Suppose that $f_k$, $(k\in\N)$,
is a collection in $X$ having the following properties.
\begin{enumerate}[\rm(i)]
\item It holds $\|f_k\|_X = 1$ for each $k\in\N$,
\item There is a point $\xi\in I$, two sequences $s_k\to \xi$ and $t_k\to\xi$
and two points $a<b$ in $I$ such that $Tf_k(s_k)\to a$ and $Tf_k(t_k)\to b$.
\end{enumerate}
Then we have the following lower bound for the Kolmogorov numbers for every $n\ge 2$.
\begin{equation} \label{eq:lowKolm}
	d_n(T\colon X\to \C) \ge \frac{b-a}{2}.
\end{equation}
\end{lemma}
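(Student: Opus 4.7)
The plan is to unwind the definition of Kolmogorov numbers and use the test sequence $f_k$ together with the two-point evaluation trick to force a lower bound on the distance from $Tf_k$ to any finite-dimensional subspace of $\C(I)$. Fix $n\ge 2$ and let $N\subseteq \C(I)$ be an arbitrary subspace with $\dim N<n$; it suffices to show $\sup_{x\in B_X}\|Tx\|_{\C/N}\ge (b-a)/2$. Since $\|f_k\|_X=1$, each $f_k\in B_X$, so the sup dominates $\sup_k \|Tf_k\|_{\C/N}$, and we aim to prove this quantity is at least $(b-a)/2$.

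The core inequality comes from testing a candidate $g\in N$ at the two points $s_k$ and $t_k$: for any continuous $g$,
\begin{equation*}
\|Tf_k-g\|_\infty \ge \max\bigl(|Tf_k(s_k)-g(s_k)|,\,|Tf_k(t_k)-g(t_k)|\bigr).
\end{equation*}
If $g$ were fixed (independent of $k$), continuity of $g$ at $\xi$ would give $g(s_k),g(t_k)\to g(\xi)$, so passing to the limit would yield $\ge\max(|a-g(\xi)|,|b-g(\xi)|)\ge (b-a)/2$, since for any real number $x$ one has $\max(|a-x|,|b-x|)\ge (b-a)/2$.

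The catch is that the near-optimal approximant $g_k\in N$ depends on $k$. To handle this, argue by contradiction: suppose $\sup_k\|Tf_k\|_{\C/N}<(b-a)/2-\eta$ for some $\eta>0$. Choose $g_k\in N$ with $\|Tf_k-g_k\|_\infty<(b-a)/2-\eta/2$. Then $\|g_k\|_\infty\le \|T\|+(b-a)/2$, so $(g_k)$ is a bounded sequence in the finite-dimensional subspace $N$; by compactness it has a subsequence $g_{k_j}$ converging uniformly to some $g\in N\subseteq \C(I)$. Now the usual estimate $|g_{k_j}(s_{k_j})-g(\xi)|\le\|g_{k_j}-g\|_\infty+|g(s_{k_j})-g(\xi)|\to 0$ (and analogously for $t_{k_j}$) lets us pass to the limit in the two-point inequality and obtain $\liminf_j\|Tf_{k_j}-g_{k_j}\|_\infty\ge\max(|a-g(\xi)|,|b-g(\xi)|)\ge(b-a)/2$, contradicting our assumption.

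The only nontrivial step is the compactness reduction that replaces the $k$-dependent $g_k$ by a single limit $g$; everything else is bookkeeping. In particular the bound is uniform in $n$ because finite-dimensionality of $N$ is used only qualitatively (to extract a convergent subsequence), not through its dimension.
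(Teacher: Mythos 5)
Your proof is correct and follows essentially the same route as the paper: pick near-optimal approximants $g_k$ in the finite-dimensional subspace $N$, use compactness of bounded sets in $N$ to replace them by a single limit $g$, and then evaluate at $s_k$ and $t_k$ to get $\max\bigl(|a-g(\xi)|,|b-g(\xi)|\bigr)\ge\tfrac{b-a}{2}$. The only differences are cosmetic bookkeeping (you argue by contradiction over an arbitrary $N$, the paper fixes a near-optimal $N$ and tracks an explicit $\varepsilon$).
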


\begin{proof}
Let $n\ge 2$ and any $\varepsilon>0$ be given.
By the definition of the $n$-th Kolmogorov number, there is a
subspace $N$ with $\dim N<n$ such that
\begin{equation*}
	d_n(T)+\varepsilon
		\ge \sup_{f\in B_{X}} \|Tf\|_{\C/N}.
\end{equation*}
Now, by the definition of the quotient norm, to every $f_k$ one can attach a
function $g_k \in N$
in a way that
\begin{equation*}
	\|Tf_k - g_k\|_\infty \le \|Tf_k\|_{\C/N} + \varepsilon.
\end{equation*}
Observe that the set of all the functions $g_k$ is bounded in $N$. Indeed,
\begin{align*}
	\|g_k\|_\infty
		\le \|Tf_k - g_k\|_\infty + \|Tf_k\|_\infty
		\le \|Tf_k\|_{\Linf/N} + \varepsilon + \|T\|\|f_k\|_X
		\le d_n(T) + 2\varepsilon + \|T\|
\end{align*}
for every $k\in\N$. Thus, since $N$ is finite-dimensional, there is a
convergent subsequence of $\{g_k\}$ which we denote $\{g_k\}$ again. Hence
$g_k$ converges to, say, $g\in N$, i.e., there is a index $k_0$ such
that $\|g_k - g\|_\infty < \varepsilon$ for every $k\ge k_0$.
The limiting function $g$ then satisfies
\begin{equation*}
	\|Tf_k - g\|_\infty \le \|Tf_k - g_k\|_\infty + \|g_k - g\|_\infty
		\le \|Tf_k\|_{\Linf/N} + 2\varepsilon
\end{equation*}
for $k\ge k_0$ and thus
\begin{equation} \label{eq:dnsup}
	\sup_{k\ge k_0} \|Tf_k - g \|_\infty
		\le d_n(V) + 3\varepsilon.
\end{equation}
Next, we estimate the left hand side of \eqref{eq:dnsup} by taking the value
attained at the points $s_k$, i.e.,
\begin{equation} \label{eq:eval1}
	\sup_{k\ge k_0} \|Tf_k - g \|_\infty
	\ge \sup_{k\ge k_0} |Tf_k(s_k) - g(s_k) |
	\ge |a-g(\xi)|,
\end{equation}
or at the points $t_k$, i.e.,
\begin{equation} \label{eq:eval2}
	\sup_{k\ge k_0} \|Tf_k - g \|_\infty
	\ge \sup_{k\ge k_0} |Tf_k(t_k) - g(t_k) |
	\ge \bigl|b-g(\xi)\bigr|,
\end{equation}
where we used the continuity of $g$.
Combining \eqref{eq:dnsup}, \eqref{eq:eval1} and \eqref{eq:eval2}, we get
\begin{equation*}
	d_n(V) + 3\varepsilon
	\ge \max \bigl\{ |a-g(t)|, |b-g(t)| \bigr\}
	\ge \tfrac{b-a}{2}.
\end{equation*}
Therefore we obtain \eqref{eq:lowKolm} by the arbitrariness of $\varepsilon$.
\end{proof}

\section{Embeddings on the interval} \label{sec:onedim}

Throughout this section, all function spaces will be defined over
the closed unit interval unless explicitly stated.

\begin{lemma} \label{lemm:Viso}
Let $n\in\N$. Then
\begin{equation*}
	i_n(V\colon \Lio\to\C) \ge \frac{1}{2n}.
\end{equation*}
\end{lemma}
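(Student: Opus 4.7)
The plan is to exhibit an explicit factorization of the identity on an $n$-dimensional Banach space through $V$, and read off the bound directly from the definition of the Isomorphism number. Concretely, I would construct operators $B\colon \ell^\infty_n \to \Lio$ and $A\colon \C\to \ell^\infty_n$ satisfying $A\circ V\circ B = \Id_{\ell^\infty_n}$ and $\|A\|\|B\| = 2n$; since $\dim \ell^\infty_n = n$, this immediately yields $i_n(V) \ge \|A\|^{-1}\|B\|^{-1} = 1/(2n)$.

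For the construction, I partition $I$ into $n$ consecutive subintervals $I_k$ of length $1/n$, write $m_k$ for the midpoint of $I_k$, denote by $I_k^-$ and $I_k^+$ the left and right halves of $I_k$, and set
\begin{equation*}
	f_k = n\chi_{I_k^-} - n\chi_{I_k^+}.
\end{equation*}
An elementary computation gives $f_k \in \Lio$ with $\|f_k\|_1 = 1$, the supports of the $f_k$ are pairwise disjoint, and $Vf_k$ is a tent function supported on $I_k$ attaining its maximum value $\tfrac12$ precisely at $m_k$ (and vanishing outside $I_k$).

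With these ingredients I would define
\begin{equation*}
	B\xi = \sum_{k=1}^n \xi_k f_k, \qquad (Ag)_k = 2\,g(m_k).
\end{equation*}
The disjointness of the supports of $f_k$ yields $\|B\xi\|_1 = \sum_k |\xi_k|$, so $\|B\| = n$ (attained at $\xi=(1,\ldots,1)$), while $\|A\|=2$ is immediate from the supremum definition (attained on the constant function). The factorization identity is verified coordinate-wise: since $Vf_j(m_k) = 0$ for $j\neq k$ and $Vf_k(m_k) = \tfrac12$, one has $(AVB\xi)_k = 2\sum_j \xi_j Vf_j(m_k) = 2\xi_k\cdot\tfrac12 = \xi_k$.

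The whole argument reduces to direct computation with piecewise-constant functions and tent functions, so there is no serious obstacle. The only point calling for care is calibrating the two normalizations simultaneously so that the product $\|A\|\|B\|$ hits exactly $2n$, which is what gives the sharp constant $1/(2n)$.
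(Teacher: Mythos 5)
Your proposal is correct and is essentially the paper's own argument: the paper also factors the identity on $\ell^\infty_n$ through $V$ using disjointly supported mean-zero step functions (producing tent functions under $V$) and point evaluations at the tent peaks, with the normalization split differently ($\|B\|=2n$, $\|A\|=1$ there versus your $\|B\|=n$, $\|A\|=2$), so the product $\|A\|\|B\|=2n$ and the bound $i_n(V)\ge \tfrac{1}{2n}$ come out the same.
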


\begin{proof}
Let $n\in\N$ be fixed. Consider the chain
\begin{equation*}
	\linfn
		\xrightarrow {B} \Lio
		\xrightarrow {V} \C
		\xrightarrow {A} \linfn
\end{equation*}
where we define $A$ as
\begin{equation*}
	(Af)_k = f\biggl(\frac{2k-1}{2n}\biggr),
		\quad (1\le k\le n),
		\quad\text{for $f\in\C$.}
\end{equation*}
and $B$ by
\begin{equation*}
	B(\vx) = 2n \sum_{k=1}^n x_k \bigl( \chi_{I_{2k-1}} - \chi_{I_{2k}} \bigr)
		\quad\text{for $\vx\in\lin$},
\end{equation*}
where the intervals $I_1$, $I_2$, \dots, $I_{2n}$ are the non-overlapping intervals
of the same length~$\frac{1}{2n}$, i.e.,
\begin{equation*}
	I_k = \biggl[ \frac{k-1}{2n}, \frac{k}{2n} \biggr]
		\quad\text{ for $1\le k \le 2n$.}
\end{equation*}
Both $A$ and $B$ are well-defined and the composition $A\circ V\circ B$ is
an identity mapping on $n$-dimensional space $\linfn$. The operator norms
are $\|A\| = 1$ and $\|B\| = 2n$ thus, by the definition
of the $n$-th Isomorphism number,
\begin{equation*}
	i_n(V)\ge \|A\|^{-1} \|B\|^{-1} \ge \frac{1}{2n}.
\end{equation*}
\end{proof}

\begin{lemma} \label{lemm:VBern}
Let $n\in\N$. Then
\begin{equation*}
	b_n(V\colon \Lio\to\C) \le \frac{1}{2n}.
\end{equation*}
\end{lemma}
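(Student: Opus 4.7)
The plan is to fix an arbitrary $n$-dimensional subspace $M \subseteq L^1_0(I)$ and produce, for every $\varepsilon>0$, a unit-norm element $f \in M$ with $\|Vf\|_\infty \le (1+\varepsilon)/(2n)$. Since $V$ is one-to-one on $L^1_0(I)$, the image $E := V(M)$ is an $n$-dimensional subspace of $C(I)$. Observe furthermore that every $g \in E$ has the form $g(t) = \int_{I \cap (-\infty,t)} f(s)\,\d s$ for some $f \in L^1_0(I)$, so $g$ vanishes at both endpoints of $I$.

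Applying Proposition~\ref{prop:CLL} to $E$, I obtain for $\varepsilon>0$ a function $g \in E$ with $\|g\|_\infty \le 1+\varepsilon$ together with points $t_1 < t_2 < \dots < t_n$ in $I$ at which $g$ alternates with values $\pm 1$. Let $f \in M$ be the (unique) preimage, i.e.\ $Vf = g$, so $f = g'$ a.e. The next step is to estimate $\|f\|_1 = \int_I |g'|$ from below via total variation. Between consecutive extremal points $t_k$ and $t_{k+1}$, the function $g$ must traverse from $\pm 1$ to $\mp 1$, giving $\int_{t_k}^{t_{k+1}} |g'|\,\d t \ge 2$. Summing over $k=1,\dots,n-1$ contributes $2(n-1)$. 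Using the boundary condition $g=0$ at both endpoints of $I$, the initial segment up to $t_1$ contributes at least $1$ and the final segment from $t_n$ contributes at least $1$, adding $2$ more. Altogether
\begin{equation*}
	\|f\|_1 = \int_I |g'|\,\d t \ge 2 + 2(n-1) = 2n.
\end{equation*}

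Consequently $\|Vf\|_\infty / \|f\|_1 \le (1+\varepsilon)/(2n)$, and after normalizing to $\tilde f := f/\|f\|_1 \in S_M$ we get $\|V\tilde f\|_\infty \le (1+\varepsilon)/(2n)$. Hence $\inf_{h \in S_M} \|Vh\|_\infty \le (1+\varepsilon)/(2n)$; since $\varepsilon>0$ and the $n$-dimensional subspace $M$ were arbitrary, taking the supremum over $M$ in the definition of $b_n$ gives $b_n(V) \le 1/(2n)$.

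The only delicate point is ensuring the boundary contribution: one needs that $g$ genuinely vanishes at both endpoints, which is precisely where the restriction to $L^1_0(I)$ (rather than all of $L^1$) is used; without it one would lose the extra factor of $2$ and obtain only $\|f\|_1 \ge 2(n-1)$, which would be too weak. I do not expect any other difficulty, since the zigzag estimate for the $L^1$ norm of $g'$ is elementary once Proposition~\ref{prop:CLL} is in hand.
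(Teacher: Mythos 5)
Your proof is correct and follows essentially the same route as the paper: apply Proposition~\ref{prop:CLL} to the image $V(M)$, and bound $\|f\|_1\ge 2n$ by the alternation of $Vf$ between the points $t_1<\dots<t_n$ together with the two boundary contributions from $Vf$ vanishing at the endpoints of $I$. One small aside only: the vanishing at the left endpoint is automatic from the definition of $V$, and the zero-mean condition in $L^1_0(I)$ is needed only at the right endpoint, so without it one would still get $\|f\|_1\ge 2n-1$ rather than $2(n-1)$ --- this does not affect your argument.
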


\begin{proof}
By the definition of the $n$-th Bernstein number,
it is enough to show that for any
$\varrho>0$ and any $n$-dimensional subspace $E$ of $\Lio$ satisfying
\begin{equation} \label{lbound}
	\|Vf\|_\infty \ge \varrho \|f\|_1
		\quad\text{for every $f\in E$}
\end{equation}
we have
\begin{equation*}
	\varrho \le \frac{1}{2n}.
\end{equation*}
To prove this, fix some arbitrary $\varepsilon\in(0,1)$.
Since $V$ is linear and injective, $V(E)$ is a $n-$dimensional subspace
of $\C$. By Proposition~\ref{prop:CLL}, there is an element
$h\in \Lio$ such that $\|Vh\|_\infty \le 1+\varepsilon$
and
\begin{equation*}
	Vh(t_k) = (-1)^k
		\quad\text{for $1\le k\le n$}.
\end{equation*}
for some
points $0\le t_1<t_2<\dots<t_n\le 1$.
Let us estimate the norm $\|h\|_1$. We have
\begin{align*}
	\int_{0}^{1} |h(s)|\,\d s
		& = \int_{0}^{t_{1}} |h(s)|\,\d s
			+ \sum_{k=1}^{n-1} \int_{t_{k}}^{t_{k+1}} |h(s)|\,\d s
			+ \int_{t_{n}}^{1} |h(s)|\,\d s
		\\
		& \ge |Vh(t_{1})|
			+ \sum_{k=1}^{n-1} |Vh(t_{k+1}) - Vh(t_{k})|
			+ |Vh(t_{n})|
		\\
		& = 1+2(n-1)+1=2n.
\end{align*}
Note, that for the third term, we used that $\int_{t_n}^1 h = \int_0^{t_n} h$
thanks to the boundary condition in $\Lio$.
Next, by \eqref{lbound},
\begin{align*}
	1+\varepsilon
		\ge \|Vh\|_\infty
		\ge \varrho \int_{0}^{1} |h(s)|\,\d s,
\end{align*}
whence
\begin{equation*}
	\varrho \le \frac{1+\varepsilon}{2n}
\end{equation*}
and the result follows by the arbitrariness of $\varepsilon$.
\end{proof}

\begin{lemma} \label{lemm:VGelf}
Let $n\in\N$. Then
\begin{equation*}
	c_n(V\colon\Lio\to\C) \ge \frac{1}{2}.
\end{equation*}
\end{lemma}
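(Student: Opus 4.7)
My plan is to show that for every closed subspace $M\subseteq L^1_0$ with $\codim M\le n-1$ and every $\varepsilon>0$, there exists $f\in M$ with $\|f\|_1\le 1$ and $\|Vf\|_\infty\ge \tfrac12-\varepsilon$. Writing $M=\bigcap_{i=1}^{n-1}\ker\phi_i$ for linearly independent $\phi_i\in(L^1_0)^*$, the isomorphism \eqref{dual} applied to the closed subspace $L^1_0\subseteq L^1$ identifies $(L^1_0)^*$ with $L^\infty(I)/\R$, so each $\phi_i$ is represented by some density $\psi_i\in L^\infty(I)$ (unique up to an additive constant) via $\phi_i(f)=\int_I f\psi_i$.

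The test functions will be the dipoles
\[
  h_\delta=\tfrac{1}{2\delta}\bigl(\chi_{[\xi-\delta,\xi]}-\chi_{[\xi,\xi+\delta]}\bigr)
\]
centered at a suitable interior point $\xi\in I$. They satisfy $h_\delta\in L^1_0$, $\|h_\delta\|_1=1$, and $Vh_\delta(\xi)=\tfrac12$, saturating the operator norm $\|V\|=\tfrac12$ from the remark after Proposition~\ref{prop:WV}. The crucial observation is that, by Lebesgue's differentiation theorem, a.e.\ point $\xi\in I$ is a common Lebesgue point of $\psi_1,\dots,\psi_{n-1}$, at which both one-sided averages $\tfrac1\delta\int_{\xi-\delta}^\xi\psi_i$ and $\tfrac1\delta\int_\xi^{\xi+\delta}\psi_i$ converge to $\psi_i(\xi)$ as $\delta\to 0^+$, so $\phi_i(h_\delta)\to 0$ for each $i$. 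I fix such a $\xi$ in the interior of $I$.

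I would then project $h_\delta$ onto $M$ by a vanishing correction. Using the linear independence of the $\phi_i$, pick a dual basis $u_1,\dots,u_{n-1}\in L^1_0$ satisfying $\phi_i(u_j)=\delta_{ij}$, and set
\[
  f_\delta = h_\delta - \sum_{j=1}^{n-1}\phi_j(h_\delta)\,u_j.
\]
Then $f_\delta\in M$ by construction and $\|f_\delta-h_\delta\|_1\le \sum_j|\phi_j(h_\delta)|\,\|u_j\|_1\to 0$ as $\delta\to 0^+$. Combined with $\|V\|=\tfrac12$, this forces $\|Vf_\delta\|_\infty\to \tfrac12$ and $\|f_\delta\|_1\to 1$; after normalization $f_\delta$ lies in $B_M$ with $\|Vf_\delta\|_\infty/\|f_\delta\|_1\to \tfrac12$, establishing the claim.

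The main obstacle is reconciling two apparently opposing demands: the test function must be localized enough to saturate $\|V\|=\tfrac12$, while simultaneously being annihilated by $n-1$ prescribed functionals, which is a global constraint. Lebesgue's differentiation theorem is precisely what dissolves this tension, since it makes the obstructions $\phi_i(h_\delta)$ vanish in the limit $\delta\to 0^+$, leaving only an arbitrarily small $L^1$-correction needed to land in $M$.
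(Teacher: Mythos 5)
Your proof is correct, but it takes a genuinely different route from the paper. The paper does not argue with subspaces directly: it invokes the Carl--Stephani characterization of Gelfand numbers, by which $c_n(V)$ is the infimum of those $\varrho$ with $\|Vf\|_\infty \le \sup_k|\varphi_k(f)|+\varrho\|f\|_1$ for fewer than $n$ functionals $\varphi_k$; after identifying $(\Lio)^*$ with $\Lnek/\R$ via \eqref{dual} (as you do), it makes the terms $\int fg_k$ small not by concentration at a point but by a pigeonhole argument on the essential ranges of the representing functions $g_k$: it finds a set $\Omega$ of positive measure on which all $g_k$ are within $\varepsilon$ of constants, splits $\Omega$ by a point $x$ into two halves of positive measure, and takes a mean-zero $f$ supported in $\Omega$ with masses $\pm\tfrac12$ on the two halves, so that $Vf(x)=\tfrac12$ while $|\int fg_k|\le\varepsilon$. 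You instead work from the raw definition, represent $M$ as $\bigcap\ker\phi_i$, exploit Lebesgue's differentiation theorem to make a shrinking dipole $h_\delta$ asymptotically annihilated by the $\phi_i$, and then land exactly in $M$ by a biorthogonal correction of vanishing $L^1$-cost. Your route avoids the Carl--Stephani reformulation and the covering/permutation combinatorics, at the price of the extra correction step (where you should note that one takes $M$ closed without loss of generality, and uses $\codim M$ many functionals, passing to a subspace of codimension exactly $n-1$ if necessary --- both harmless); the paper's route needs no biorthogonal system because the functional values are allowed to be merely small rather than zero. Both proofs hinge on the same two ingredients: the quotient description of $(\Lio)^*$ and a unit-norm, mean-zero test function whose primitive reaches the extremal value $\tfrac12=\|V\|$ while being nearly invisible to the given functionals.
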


\begin{proof}
We will use an alternative expression for Gelfand numbers,
proved in \citep[Proposition~2.3.2]{Car:90}, which proposes
that $c_n(V)$ is the infimum of all $\varrho>0$ such that
\begin{equation} \label{carldef}
	\|Vf\|_\infty \le \sup_{1\le k\le m} |\varphi_k(f)| + \varrho \|f\|_1
		\quad\text{for all $f\in\Lio$},
\end{equation}
where $m<n$ and $\varphi_k$, ($1\le k\le m$), are any continuous linear functionals on $\Lio$.

First, we explicitly describe all possible $\varphi$, i.e., we find the
dual space of $\Lio$. Since $\Lio$ is closed subspace of $\Li$, by
\eqref{dual}, its dual is isometric to
the quotient
\begin{equation*}
	(\Lio)^* \simeq \Lnek/(\Lio)^{\perp}.
\end{equation*}
Here, we have
\begin{align*}
(\Lio)^\perp
	& \simeq \bigl\{ g\in \Lnek;\,
		\textstyle\int_0^1 fg=0 \text{ for all }f\in\Lio \bigr\}
		\\
	& \simeq \{\text{$g$ is a constant function}\}
		\simeq \R,
\end{align*}
so $(\Lio)^*$ is isometric to $\Lnek/\R$ via the
mapping $\sigma\colon \Lnek/\R \to \Lio^*$ given by
\begin{equation*}
	\sigma_{[g]}(f) = \int_{0}^{1} f(t)g(t)\,\d t
		\quad\text{ for $[g]\in\Lnek/\R$ and $f\in\Lio$.}
\end{equation*}
Note that $\sigma$ is well-defined, i.e., the definition does not depend on the
choice of the representative $g$, since,
if $[g_1]=[g_2]$, then there is a constant $c$ such that
$g_1 = g_2 + c$ a.e.\ and hence
\begin{equation*}
	\sigma_{[g_1]}(f)
		= \int_{0}^{1} f(t)g_1(t)\,\d t
		= c \int_{0}^{1} f(t)\,\d t
		+ \int_{0}^{1} f(t)g_2(t)\,\d t
		= \sigma_{[g_2]}(f)
\end{equation*}
for every $f\in\Lio$ and therefore $\sigma_{[g_1]}=\sigma_{[g_2]}$.
In conclusion, \eqref{carldef} rewrites as
\begin{equation} \label{carldef2}
	\|Vf\|_\infty
		\le \sup_{1\le k\le m}\,
		\biggl| \int_{0}^{1} f(t)g_k(t)\,\d t \biggr|
		+ \varrho \|f\|_1,
\end{equation}
where $m<n$ and $g_k$ are bounded functions.

Next, fix $\varrho>0$, $m<n$ and any $g_k\in\Lnek$, ($1\le k\le m$),
such that \eqref{carldef2} holds for every $f\in\Lio$. We
have to show that $\varrho\ge \tfrac{1}{2}$.
Let $\varepsilon>0$
and denote $M=\max_{1\le k\le m} \|g_k\|_\infty$.
All the values of the functions $|g_k|$ thus essentially range in
the interval $[0,M]$.
Let $I_1, I_2,\dots,I_r$ be sets of diameter less than $\varepsilon$ 
and covering $[0,M]$.
Denote by $M_{k,l}$ the preimage of $I_l$ under $|g_k|$, i.e.,
\begin{equation*}
	M_{k,l} = |g_k|^{-1}[I_l],
	\quad (1\le k \le n, 1\le l\le d).
\end{equation*}
Clearly
\begin{equation} \label{cupcap}
	\bigcup_{\pi\in\Pi_d} \bigcap_{k=1}^n M_{k,\pi(k)}
		= \bigcap_{k=1}^n \bigcup_{l=1}^r M_{k,l}
		= [0,1]
		\quad\text{a.e.,}
\end{equation}
where $\Pi_r$ denotes the set of all the permutations of the elements
$1,2,\dots,r$.
Therefore, by \eqref{cupcap}, there is a $\pi\in\Pi_r$, such that
\begin{equation*}
	\Omega = \bigcap_{k=1}^n M_{k,\pi(k)}
\end{equation*}
has positive measure.
Now, fix any $i_l \in I_l$, $(1\le l\le r)$. We have
\begin{equation} \label{aconst}
	\bigl| |g_k| - i_{\pi(k)} \bigr| \le \diam I_{\pi(k)} < \varepsilon
		\quad\text{on $\Omega$},
		\quad (1\le k\le n).
\end{equation}
One can also choose a point $x$ in $[0,1]$ such that both
$\Omega_1 = (0,x)\cap \Omega$ and $\Omega_2 = (x,1)\cap \Omega$
have positive measure. Now, there is a function $f$ having support
in $\Omega$, positive on $\Omega_1$, negative on $\Omega_2$
and satisfying
\begin{equation} \label{om1om2}
		\int_{\Omega_1} f(t)\,\d t
		= - \int_{\Omega_2} f(t)\,\d t
		= \frac{1}{2}.
\end{equation}
Observe that also $\|f\|_1=1$ and 
\begin{equation} \label{half}
	Vf(x) = \int_{0}^{x} f(t)\,\d t = \frac{1}{2}.
\end{equation}
Since, by \eqref{om1om2}, $\int_0^1 f = 0$ and, thanks to \eqref{aconst},
$g_k$ is almost constant on $\Omega$, we claim that $\int_0^1 fg_k$ is small.
Indeed, for any $1\le k\le n$,
\begin{align}
\begin{split} \label{small}
	\int_{0}^{1} f(t)g_k(t)\,\d t
		& = \int_{\Omega_1} f(t)g_k(t)\,\d t
			+ \int_{\Omega_2} f(t)g_k(t)\,\d t
			\\
		& \le (i_{\pi(k)} + \varepsilon) \int_{\Omega_1} f(t)\,\d t
			+ (i_{\pi(k)} - \varepsilon) \int_{\Omega_2} f(t)\,\d t
			\\
		& \le \frac12(i_{\pi(k)} + \varepsilon)
			-\frac12(i_{\pi(k)} - \varepsilon)
		\le \varepsilon.
\end{split}
\end{align}
Therefore, by \eqref{carldef2} together with \eqref{half} and \eqref{small},
\begin{equation*}
	\frac{1}{2} = Vf(x)
		\le \|Vf\|_\infty
		\le \sup_{1\le k\le m}\,
		\biggl| \int_{0}^{1} f(t)g_k(t)\,\d t \biggr|
		+ \varrho \|f\|_1
		\le \varepsilon + \varrho
\end{equation*}
and $\varrho\ge\tfrac{1}{2}$ by the arbitrary choice of $\varepsilon$.
\end{proof}

\begin{lemma} \label{lemm:VKolm}
Let $n\ge 2$. Then
\begin{equation*}
	d_n(V\colon \Lio\to\C) = \frac{1}{4}.
\end{equation*}
\end{lemma}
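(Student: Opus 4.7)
The plan is to prove the two bounds $d_n(V) \ge 1/4$ and $d_n(V) \le 1/4$ separately; since Kolmogorov numbers are non-increasing in $n$, for the upper bound it suffices to treat $n = 2$.

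For the lower bound I would invoke Lemma~\ref{lemm:lowKolm} directly. Fix an interior point $\xi \in I$ and, for large $k$, set $f_k = k\chi_{(\xi,\xi+1/(2k))} - k\chi_{(\xi+1/(2k),\xi+1/k)}$. Then $f_k \in \Lio$ with $\|f_k\|_1 = 1$; moreover $Vf_k(\xi) = 0$ and $Vf_k(\xi + 1/(2k)) = 1/2$. Taking $s_k \equiv \xi$ and $t_k = \xi + 1/(2k)$, both sequences converge to $\xi$, and Lemma~\ref{lemm:lowKolm} yields $d_n(V) \ge (1/2 - 0)/2 = 1/4$ for every $n \ge 2$.

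For the matching upper bound I would plug the one-dimensional subspace $N = \operatorname{span}\{\mathbf{1}\} \subset \C(I)$ into the infimum defining $d_2(V)$. For any $f \in \Lio$ the function $Vf$ is continuous with $Vf(0) = Vf(1) = 0$, so its best constant approximation is the midpoint of its extremes, giving $\|Vf\|_{\C/N} = \tfrac12(\max Vf - \min Vf)$. It thus remains to show that the oscillation of $Vf$ is bounded by $\|f\|_1/2$. Let $t_1$ and $t_2$ be points at which $Vf$ attains its maximum and minimum and assume without loss of generality $t_1 < t_2$. I would then bound $Vf(t_1) - Vf(t_2) = -\int_{t_1}^{t_2} f$ by $\int_{t_1}^{t_2} |f|$, and simultaneously rewrite the same quantity using $\int_0^1 f = 0$ as $\int_0^{t_1} f + \int_{t_2}^1 f \le \int_0^{t_1}|f| + \int_{t_2}^1|f|$. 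Adding the two estimates yields $2(Vf(t_1) - Vf(t_2)) \le \|f\|_1$, which gives exactly the oscillation bound we need.

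The delicate point is really this upper bound argument: the zero-mean condition is precisely what cuts the naive bound in half for the oscillation of $Vf$, and this is the mechanism separating the Kolmogorov number $1/4$ from the Gelfand number $1/2$ obtained in Lemma~\ref{lemm:VGelf}, thereby exhibiting the failure of the lifting property for $V_0^{1,1}(I)$ mentioned after Theorem~\ref{thm:onedim}.
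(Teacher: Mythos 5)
Your proposal is correct and follows essentially the same route as the paper: the lower bound applies Lemma~\ref{lemm:lowKolm} to unit-norm zero-mean bumps whose primitives oscillate by $\tfrac12$ near a single point (the paper concentrates them at the endpoints, you at an interior $\xi$ --- an immaterial difference), and the upper bound uses the subspace of constants together with the same zero-mean averaging trick showing $\operatorname{osc}(Vf)\le\tfrac12\|f\|_1$. No gaps.
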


\begin{proof}
Let us first establish the inequality ``$\le$''.
By the monotonicity of $s$-numbers, it is enough to show that $d_2(V)\le\frac{1}{4}$. 
Let $N$ be the one-dimensional subspace of $\Linf$
consisting of constant functions.
Then, by the definition of the second Kolmogorov number,
\begin{equation*}
	d_2(V)\le \sup_{f\in B_\Lio} \inf_{c\in \R} \|Vf-c\|_\infty.
\end{equation*}
Now, to a given $\varepsilon>0$, there is $f\in B_\Lio$ such that
\begin{equation*}
	d_2(V) - \varepsilon
		\le \inf_{c\in\R} \|Vf-c\|_\infty.
\end{equation*}
We claim that
\begin{equation*}
	|Vf(u)-Vf(v)|\le\frac{1}{2}
\end{equation*}
for any $u,v\in [0,1]$. To observe that, let $E$ be any measurable subset of $[0,1]$
and $E^c$ its complement. Then
\begin{align*}
	\int_E f 
		& = \int_E f(t)\,\d t  - \frac{1}{2} \int_{E\cup E^c} f(t)\,\d t
		\\
		& = \frac{1}{2} \int_E f(t)\,\d t - \frac{1}{2} \int_{E^c} f(t)\,\d t
		\\
		& \le \frac{1}{2} \int_{0}^{1} |f(t)|\,\d t
		\le \frac{1}{2}
\end{align*}
and the claim follows once we set $E$ as the interval with endpoints $u$ and $v$.
Next, on setting
\begin{equation*}
	c = \frac{1}{2}(\sup_{[0,1]} Vf - \min Vf),
\end{equation*}
we obtain that
\begin{equation*}
	\|Vf - c\|_\infty \le \frac{1}{4}.
\end{equation*}
Since $\varepsilon$ was arbitrary, we have $d_n(V)\le \frac{1}{4}$.

For the opposite inequality, 
consider the functions
\begin{equation*}
	f_k = 2^{k}\bigl( 
		\chi_{(2^{-k-1},2^{-k})}
		- \chi_{(1-2^{-k},1-2^{-k-1})}
		\bigr)
	\quad\text{for $k\in\N$}.
\end{equation*}
These are linearly independent elements of $\Lio$ with $\|f_k\|_1=1$ and
satisfying $Vf_k(0)=0$ and $Vf_k(2^{-k})=\frac{1}{2}$.
The assertion thus follows by Lemma~\ref{lemm:lowKolm}.
\end{proof}

\begin{proof}[Proof of Theorem~\ref{thm:onedim}]
By Proposition~\ref{prop:WV}, the $s$-numbers of
Sobolev embedding $V_0^{1,1}(I)\embed \C(I)$
coincide with $s$-numbers of the operator
$V\colon L_0^1(I) \to \C(I)$.
We can assume that $I$ is the unit interval
because in the non-compact setting the scaling
does not affect the norm and the values of $s$-numbers.

The upper bound for the Bernstein numbers $b_n(V)\le\tfrac{1}{2n}$ follows from
Lemma~\ref{lemm:VBern} and the lower
bound for the Isomorphism numbers reads as $i_n(V)\ge \tfrac{1}{2n}$, thanks to
Lemma~\ref{lemm:Viso}.  Since the Isomorphism numbers are the smallest among
strict $s$-numbers, the equalities in \eqref{mb1} follow.

Next, since the Approximation numbers are the largest $s$-numbers, we have
$\tfrac12=\|V\|\ge a_n(V) \ge c_n(V) \ge \tfrac12$, where the last inequality
is due to Lemma~\ref{lemm:VGelf}. This proves \eqref{mb2}. Finally, the
relation \eqref{mb3} is subject of Lemma~\ref{lemm:VKolm}.
\end{proof}

\section{Higher-dimensional Sobolev embeddings} \label{sec:fulldim}

In this section we denote by $Q=(0,1)^d$ 
the $d$-dimensional cube where $d\ge 2$.
All the function spaces are considered
over $Q$, unless explicitly stated.

\def\DS{V_0^1L^{d,1}}

\begin{lemma} \label{lemm:Wiso}
\def\DS{V_0^1X}
Let $X$ be any Banach function space satisfying
$X \subseteq \Ldi$.
Then there exists a positive constant $c$, such that
for every $n\in\N$
\begin{equation*}
	i_n (\DS\embed C) \ge cn^{-\frac{1}{d}}.	
\end{equation*}
\end{lemma}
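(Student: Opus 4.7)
The plan is to lift the one-dimensional construction of Lemma~\ref{lemm:Viso} to $d$ dimensions by replacing the $2n$ subintervals by $n$ subcubes of side-length $h\sim n^{-1/d}$. Fix $k\in\N$ so that $n\le 2^{kd}\le 2^d n$, partition $Q$ into $N=2^{kd}$ congruent closed subcubes of side $h=2^{-k}$, select any $n$ of them $Q_1,\dots,Q_n$, and let $p_i$ denote the centre of $Q_i$. On each $Q_i$ place a Lipschitz bump $u_i$ supported in $Q_i$, satisfying $u_i(p_i)=1$, $u_i\equiv 0$ on $\partial Q_i$, and $|\nabla u_i|\le C_d/h$ almost everywhere (for instance a tensor product of one-dimensional hat functions centred at $p_i$). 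Each $u_i$ extends by zero to $\R^d$ as a Lipschitz function and thus belongs to $V_0^1 X(Q)$.

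Next, introduce the factoring operators
\begin{equation*}
A\colon \C(Q)\to\linfn,\quad (Af)_i=f(p_i),
\qquad
B\colon \linfn\to V_0^1 X(Q),\quad B(\vx)=\sum_{i=1}^n x_i u_i.
\end{equation*}
Since the supports of the $u_i$ are pairwise disjoint and $u_i(p_j)=\delta_{ij}$, the composition $A\circ T\circ B$, with $T$ denoting the Sobolev embedding, is the identity on the $n$-dimensional space $\linfn$. Clearly $\|A\|\le 1$. Disjointness of the supports of the $\nabla u_i$ yields the pointwise bound $|\nabla B(\vx)|\le (C_d/h)\|\vx\|_\infty\chi_Q$, hence, by the lattice property of the Banach function norm,
\begin{equation*}
\|B(\vx)\|_{V_0^1 X}=\|\nabla B(\vx)\|_X
\le \frac{C_d}{h}\|\chi_Q\|_X\|\vx\|_\infty
\le 2C_d\|\chi_Q\|_X\,n^{1/d}\|\vx\|_\infty,
\end{equation*}
using $1/h\le 2n^{1/d}$ from the choice of $k$. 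The definition of the Isomorphism numbers then yields
\begin{equation*}
i_n(V_0^1 X\embed \C)\ge \|A\|^{-1}\|B\|^{-1}\ge c n^{-1/d},
\qquad c=(2C_d\|\chi_Q\|_X)^{-1}.
\end{equation*}

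Conceptually, the proof rests on the balance of two scales: each $u_i$ has unit peak value while its gradient has magnitude $\sim n^{1/d}$, and the disjointness of supports prevents any blow-up from summing $n$ such bumps. The Hilbert space-filling curve advertised in the introduction is \emph{not} needed for this lower bound on $i_n$; it will enter only in the matching upper bound for the Bernstein numbers, where the one-dimensional zigzag argument has to be transferred through a locality-preserving ordering of the subcubes. The only place in the present lemma that requires care is the lattice estimate, which is immediate from $\chi_F\le \chi_Q$ for $F=\bigcup_i Q_i\subseteq Q$ and is the reason the constant $c$ depends on $X$ only through $\|\chi_Q\|_X$.
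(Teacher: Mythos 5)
Your proof is correct and follows essentially the same route as the paper: factor the identity on $\ell^\infty_n$ through the embedding via point evaluations and disjointly supported unit-height bumps with gradients of size $\sim n^{1/d}$, then bound $\|B\|$ by $\|\chi_Q\|_X$ times that scale. The only differences are cosmetic (hat functions on subcubes versus cones on balls, and choosing $k$ with $n\le 2^{kd}\le 2^d n$ instead of taking $n=m^d$ and invoking monotonicity), and you are right that the Hilbert curve plays no role here.
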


\begin{proof}
\def\DS{V_0^1X}
Let $n=m^d$ for some $m\in\N$. Denote $B_1$, $B_2$, \dots, $B_n$
disjoint balls in $Q$ having radii $r=\tfrac{1}{2m}$ and.
Let us label the centre point of $B_k$ by $x_k$, ($1\le k\le n$).
On setting
\begin{equation*}
	u_k(x) = \bigl( r - |x-x_k| \bigr) \chi_{B_k}
		\quad\text{for $x\in Q$,}
\end{equation*}
we have that $\|u_k\|_\infty = 1$
and $|\nabla u_k|=\chi_{B_k}$ a.e.\ for every $1\le k\le n$.
Consider now the chain
\begin{equation*}
	\linfn \xrightarrow{B} \DS
		\embed \C
		\xrightarrow{A} \linfn,
\end{equation*}
where $A$ is defined as
\begin{equation*}
	(Au)_k = u(x_k),
		\quad(1\le k\le n),
		\quad\text{for $u\in \C$}
\end{equation*}
and $B$ is given by
\begin{equation*}
	B(\vy) = \frac{1}{r} \sum_{k=1}^n y_ku_k
		\quad\text{for $\vy\in\linfn$}.
\end{equation*}
Both $A$ and $B$ are well-defined and the composition $A\circ \Id\circ B$ forms
an identity on $\linfn$. As for the norms, clearly $\|A\|=1$ and
\begin{equation*}
	\|B(\vy)\|_{\DS}
		= \frac{1}{r} \biggl\| \sum_{k=1}^n y_k \nabla u_k \biggr\|_{X}
		\le \frac{1}{r} \|\vy\| \biggl\| \sum_{k=1}^n \nabla u_k \biggr\|_{X}
		\le \frac{\|\chi_Q\|_X}{r}\|\vy\|,
\end{equation*}
therefore, by the definition of the $n$-th Isomorphism number,
\begin{equation*}
	i_n(\DS\embed \C)
		\ge \frac{1}{\|A\| \|B\|}
		\ge \frac{r}{\|\chi_Q\|_X}
		= \frac{1}{2\|\chi_Q\|_X}n^{-\frac{1}{d}}.
\end{equation*}
The result for arbitrary $n$ then follows by the monotonicity (S1) of
the Isomorphism numbers.
\end{proof}

\begin{lemma}\label{lemm:WBern}
There is a positive constant $c=c(d)$ such that for every $n\in\N$
we have the following estimate of the Bernstein numbers
\begin{equation*}
	b_n(\DS\to\C) \le c n^{-\frac{1}{d}}.
\end{equation*}
\end{lemma}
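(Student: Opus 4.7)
The plan is to adapt the one-dimensional zigzag argument of Lemma~\ref{lemm:VBern} to dimension~$d$ by substituting the linear ordering of $I$ with a Hilbert space-filling curve $\gamma\colon[0,1]\to Q$ (the limit of the approximations depicted in Figure~\ref{fig1}). The map $\gamma$ is continuous, surjective, H\"older of order~$1/d$, and dyadically compatible: for every $j$, the $2^{jd}$ equal subintervals of $[0,1]$ are mapped bijectively, modulo boundaries of measure zero, onto the $2^{jd}$ dyadic subcubes of $Q$ of side $2^{-j}$. In particular, for any parameters $t_1<\dots<t_n$ in $[0,1]$ and any dyadic subcube $Q'\subseteq Q$, the images $y_k=\gamma(t_k)$ lying inside $Q'$ form a consecutive sub-family $y_a,y_{a+1},\dots,y_b$ (hence one that alternates in sign once $u(y_k)=(-1)^k$ is imposed).

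Fix any $n$-dimensional subspace $M\subseteq V_0^1L^{d,1}(Q)$. The embedding \eqref{WC} also identifies $M$ with an $n$-dimensional subspace of $\C(Q)$. Since $\gamma$ is surjective, the pullback $u\mapsto u\circ\gamma$ isometrically embeds $\C(Q)$ into $\C([0,1])$, so Proposition~\ref{prop:CLL} applied to the image yields, for every $\varepsilon>0$, a function $u\in M$ with $\|u\|_\infty\le 1+\varepsilon$ and parameters $t_1<\dots<t_n$ such that $u(y_k)=(-1)^k$. By the definition of Bernstein numbers, passing $\varepsilon\to 0$, it then suffices to prove the lower estimate $\|\nabla u\|_{L^{d,1}(Q)}\ge c(d)\,n^{1/d}$. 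The strategy is to produce a pairwise disjoint family $\{Q'_i\}_{i\in\mathcal{I}}$ of dyadic subcubes of $Q$, each containing at least two zigzag points, with $|\mathcal{I}|\ge c_1(d)\,n$; granted this, the consecutive-block property together with the John-domain Sobolev–Poincar\'e estimate \eqref{sobinf1} gives $\|\nabla u\|_{L^{d,1}(Q'_i)}\ge 2/C(d)$ for each $i$, and the Lorentz-norm subadditivity \eqref{Lorsum} (with $p=d$, $q=1$) yields
\[
c_1(d)\,n\,\Bigl(\tfrac{2}{C(d)}\Bigr)^d
\;\le\;\sum_{i\in\mathcal{I}}\|\nabla u\,\chi_{Q'_i}\|_{d,1}^d
\;\le\;\|\nabla u\|_{L^{d,1}(Q)}^d.
\]

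The chief obstacle is producing the family with $|\mathcal{I}|\gtrsim n$ and constants depending only on $d$. A na\"ive Calder\'on–Zygmund stopping on the $2^d$-ary Hilbert dyadic tree is not enough: if the parameters $t_k$ concentrate in a nested dyadic chain, the stopping tree degenerates to a comb and yields too few terminal pairs. The remedy is to run the selection on the one-dimensional binary dyadic tree of $[0,1]$—whose splits are realised in $Q$, via $\gamma$, as dyadic rectangles of aspect ratio at most $2^{d-1}$, hence John domains with constants depending only on $d$—and combine the dyadic bookkeeping with the H\"older-$1/d$ positional control of $\gamma$. Establishing the required linear-in-$n$ count through the combinatorial bookkeeping on this binary tree, where the paper's ``locality-preservation'' property of the Hilbert curve is decisively used, is the technical heart of the argument that remains to be verified in detail.
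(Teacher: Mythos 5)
Your overall skeleton is the right one (and close to the paper's): transfer the one-dimensional zigzag idea along the Hilbert curve, get a function $u$ in the subspace with $\|u\|_\infty\le 1+\varepsilon$ alternating between $\pm1$ at $n$ points ordered along the curve, and convert the sign changes into a lower bound $\|\nabla u\|_{d,1}\ge c(d)n^{1/d}$ via the John-domain oscillation estimate \eqref{sobinf1} and the $\ell^d$-subadditivity \eqref{Lorsum}. The pullback $u\mapsto u\circ\gamma$ as a way to invoke Proposition~\ref{prop:CLL} is legitimate. But the step you yourself flag as ``the technical heart that remains to be verified'' is a genuine gap, and in the form you propose it cannot be closed: it is in general \emph{impossible} to find $\gtrsim n$ pairwise disjoint \emph{dyadic} cubes (or dyadic intervals of $[0,1]$) each containing at least two of the zigzag points. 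Take the parameters $t_k$ clustering at $\tfrac12$ from both sides, e.g.\ $t_k=\tfrac12-2^{-k-1}$ for $k\le n/2$ and $t_k=\tfrac12+2^{-(n-k)-1}$ for $k>n/2$. Every dyadic (or $2^d$-adic) interval containing two parameters from the left cluster is of the form $[\tfrac12-2^{-l},\tfrac12)$, so these form a single nested chain; likewise on the right; and no proper dyadic interval contains parameters from both sides, since $\tfrac12$ is a dyadic endpoint at every level. Hence any disjoint family of dyadic intervals (equivalently, via the dyadic compatibility of $\gamma$, of dyadic cubes in $Q$) each containing two zigzag points has cardinality $O(1)$, not $\gtrsim n$. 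Your proposed remedy (running the stopping on the binary tree of $[0,1]$, with splits realized as dyadic rectangles) suffers from exactly the same nestedness obstruction, so no Calder\'on--Zygmund-type selection of dyadic sets can deliver the linear count your endgame needs.

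The fix is to abandon dyadic sets altogether and use, as the paper does, the ``blocks'' of \emph{consecutive} cubes of one fine generation $\mathcal Q_k$ ordered along the $k$-th Hilbert approximation: if $x_{l_1},\dots,x_{l_n}$ are the centers where the zigzag values $(-1)^j$ are attained, set $\Omega_j=Q_{l_j}\cup\dots\cup Q_{l_{j+1}}$. No selection or counting argument is needed: there are automatically $n-1$ such blocks, they overlap with multiplicity at most $2$, and the whole point of Lemma~\ref{lemm:John} is that every such union of consecutive Hilbert-ordered cubes is a John domain with constant depending only on $d$, so \eqref{sobinf3} gives $2\le C(d)\|\nabla u\,\chi_{\Omega_j}\|_{d,1}$ for each $j$. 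One then finishes either as in the paper, by H\"older,
\begin{equation*}
	2(n-1)\le C\sum_{j=1}^{n-1}\|\nabla u\,\chi_{\Omega_j}\|_{d,1}
	\le C(n-1)^{1/d'}\Bigl(\sum_{j=1}^{n-1}\|\nabla u\,\chi_{\Omega_j}\|_{d,1}^d\Bigr)^{1/d}
	\le C'(n-1)^{1/d'}\|\nabla u\|_{d,1},
\end{equation*}
or, closer to your outline, by keeping only every other block to gain disjointness and applying \eqref{Lorsum} directly. Note also that a single dyadic cube containing two consecutive zigzag points would not even be the natural object here: what must contain the two points with controlled geometry is the portion of $Q$ traced by the curve between them, which is precisely $\Omega_j$, not a dyadic cube.
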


Before we prove this lemma, we will make a preliminary
discussion.
Let $d$ and $k$ be given. We shall work with the dyadic cubes in $Q$.
For $j=0,1,\dots,k$ we denote by $\mathcal Q_j$ the family of all the dyadic cubes
in $Q$ with side length $2^{-j}$, i.e.,
\begin{equation} \label{dicubes}
	\mathcal Q_j = \bigl\{ \{x\in Q;\,2^jx- z \in Q\};\, z\in\Z^d \bigr\}.
\end{equation}
We shall also refer to $\mathcal Q_j$ as the $j$-th generation of dyadic cubes in $Q$.

We would like to define an arrangement of the cubes in $\mathcal Q_k$
in a way that we could transform the cube $Q$ into the straight strip.
We moreover require that every subsequence of consecutive cubes
forms a set which is ``plump'' enough.
More precisely, denote by $Q\colon \{1,2,\dots,2^{dk}\}\to \mathcal Q_k$
a numbering of cubes in $\mathcal Q_k$ with the property
that consecutive cubes $Q_j$ and $Q_{j+1}$ neighbourhood by a face
and define
\def\Oij{\Omega_{ij}}
\begin{equation} \label{Oij}
	\Oij = \bigcup_{l=i}^j Q_l
		\quad\text{for $1\le i\le j\le 2^{dk}$}.
\end{equation}
We would like to have the inequality
\begin{equation} \label{sobinf3}
	\sup_{x,y\in\Oij} |u(x)-u(y)| \le C \|\nabla u\|_{d,1(\Oij)}
\end{equation}
valid for every continuous $u$ having $|\nabla u|\in L^{d,1}(\Omega)$
and for every $1\le i\le j\le 2^{dk}$
with the absolute constant $C$ independent of $i$, $j$ and $k$.

By \eqref{sobinf1}
the constant $C$ in \eqref{sobinf3} might be controlled by the
John constant $C_J$ of the John domain \eqref{defjohn}
and by the dimension $d$.
Therefore it is enough to have $C_J(\Oij)$ uniformly bounded in
$i$, $j$ and $k$.
However not every arrangement $Q$ may possess this property. For instance,
one has to avoid the numbering ``by lines'' since any such thin joist
would have the constant $C_J$ proportional to $2^k$.

As an example of proper numbering we introduce the arrangement
inspired by the $k$-th approximation of Hilbert space-filling curve 
which we denote by
\def\Hdk{\mathcal H^d_k}
$\Hdk\colon [1,2^{dk}] \to Q$.
Such mappings are well-studied usually in the field of computer science
where they are used for instance in an implementation of a multiattribute
access methods, data clustering \citep{Moo:01} or as a heuristics for Traveling Salesman Problem \citep{Pla:89}.

Unfortunately, it is not easy to give the exact definition of $\Hdk$ and
we will not do it here. For those readers who are interested
in this topic, we refer to \citep{But:69} or \citep{Alb:00}
for analytical definition
or \citep{Moo:01} for recurrent geometric view.
We will use only one basic fact of those curves which can be formulated as follows.

\begin{quotation}
Once the curve $\Hdk$ enters any cube $\widetilde Q$ from the family $\mathcal
Q_l$, it does not leave $\widetilde Q$ until it visits all the subcubes of
$\widetilde Q$ contained in the refined family $\mathcal Q_{l+1}$
(and hence $\mathcal Q_m$ for any $m>l$). For illustration, see Figure~\ref{fig1}.
\end{quotation}

Let us summarize the above-mentioned thoughts into the following lemma.

\begin{lemma} \label{lemm:John}
Let $Q=(0,1)^d$, $d\ge 2$, and let $\mathcal Q_k$ be the family of dyadic
cubes \eqref{dicubes} for some $k\in\N$.
Suppose that the cubes in $\mathcal Q_k$ are labeled along the
path $\Hdk$, i.e., set $Q\colon\{1,2,\dots,2^{dk}\} \to \mathcal Q_k$
as $Q_l = \widetilde{Q}$ where $\widetilde{Q}\in\mathcal Q_k$ is such that $\Hdk(l)\in \widetilde{Q}$.
Then all the sets $\Oij$ defined by \eqref{Oij} are John domains
with comparable constants depending on $d$ and independent of $k$, $i$ and $j$,
i.e., there is $C=C(d)$ such that
\begin{equation*}
	C_J(\Oij) \le C
		\quad\text{for every $1\le i\le j\le 2^{dk}$.}
\end{equation*}
\end{lemma}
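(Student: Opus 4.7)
The plan is to exploit the recursive nesting property of the Hilbert curve $\Hdk$ quoted in the excerpt to decompose each $\Omega_{ij}$ into a hierarchical collection of dyadic subcubes that are entirely contained in $\Omega_{ij}$, and then to build John curves piecewise along this decomposition.

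First, I would decompose the integer interval $[i,j]$ into its maximal dyadic subintervals in the standard way. By the nesting property, each such subinterval of length $2^{ds}$ whose left endpoint is a multiple of $2^{ds}$ corresponds under $\Hdk$ to the complete visit of a single cube in $\mathcal Q_{k-s}$. This yields a decomposition
\begin{equation*}
	\Omega_{ij} = \bigcup_{p=1}^m \widetilde Q_p,
\end{equation*}
where the $\widetilde Q_p$ are pairwise essentially disjoint dyadic cubes entirely contained in $\Omega_{ij}$, enumerated in the order traversed by $\Hdk$, with consecutive cubes face-adjacent (the smaller one's face lying inside a face of the larger). Standard properties of dyadic interval decompositions guarantee that the side lengths $r_p$ follow the bitonic pattern $r_1\le r_2\le\cdots\le r_{p^*}\ge r_{p^*+1}\ge\cdots\ge r_m$ with consecutive ratios bounded by $2$.

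Next, I would take as the John central point $x_0$ the centre of one of the largest cubes $\widetilde Q_{p^*}$. Given an arbitrary $x\in\Omega_{ij}$ lying in some $\widetilde Q_{p_x}$, the John curve $\gamma$ starts by running from $x$ to the centre of $\widetilde Q_{p_x}$ along a straight segment, and then proceeds through the decomposition by joining the centres of consecutive cubes via the midpoints of shared faces, until it reaches $x_0$. The required estimate \eqref{defjohn} then splits into two ingredients: (i) whenever $\gamma(t)$ lies in (or on the face between) cubes of the decomposition with smaller side length $r_p$, its distance to $\partial\Omega_{ij}$ is at least $c(d)\,r_p$, because the involved cubes are entirely inside $\Omega_{ij}$ and $\gamma(t)$ is placed canonically centred in them; (ii) the arclength from $x$ to $\gamma(t)$ is controlled by the geometric sum $\sum_{q} r_q \le C(d)\,r_p$, thanks to the bitonic size pattern with consecutive ratio at most $2$. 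Combining (i) and (ii) gives the John condition with a constant depending only on the dimension.

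The main technical obstacle I expect lies in Step~1: verifying that the decomposition coming from the Hilbert curve truly has face-adjacency between cubes of possibly different scales and that the shared face is large enough (comparable to the smaller cube's face) for the midpoint construction in Step~2 to keep $\gamma(t)$ at distance $\gtrsim r_p$ from $\partial\Omega_{ij}$. This requires tracing $\Hdk$ through the smallest common dyadic ancestor of two consecutive decomposition cubes and invoking the recursive self-similarity of $\Hdk$ together with the face-adjacency of its enumeration of $\mathcal Q_k$; once that structural fact is secured, the John estimate is a routine geometric summation.
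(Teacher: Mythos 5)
Your proposal follows essentially the same route as the paper: both decompose $\Omega_{ij}$ into the maximal dyadic cubes it contains (enumerated along the Hilbert order, consecutive ones face-adjacent, with a bounded number of cubes per scale on either side of the largest one), place the John central point at the centre of a largest cube, and run a polyline through cube centres and shared-face midpoints, concluding with the same geometric-sum-versus-distance-to-boundary comparison. One minor inaccuracy: consecutive side-length ratios in the maximal dyadic decomposition need not be bounded by $2$ (scales can be skipped), but this is harmless, since your arclength estimate only requires the bitonic size pattern together with the bounded multiplicity of each scale.
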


\begin{proof}
Let $d$, $k$, $i$ and $j$ be fixed.  We may imagine $\Oij$ as a chain of
``small cubes'' from the youngest family $\mathcal Q_k$. As the chain
twists, it also fills the larger dyadic cubes from older families
$\mathcal Q_m$, ($m<k$).

Denote by $\mathcal Q_{k_1}$ the oldest family for which there exists any
dyadic cube contained in $\Oij$ and label any of such cube as $Q_1^{k_1}$.  The
curve $\Hdk$ passes through $Q_1^{k_1}$ and exits by two different faces. Let
us follow $\Hdk$ on one of its exists (the other one would be analogous) and
watch for the largest filled dyadic cubes. We may still meet the dyadic cubes
from $\mathcal Q_{k_1}$. If so, denote $m_1$ its total amount and label them by
$Q_2^{k_1}$, \dots, $Q_{m_1}^{k_1}$. Observe that $m_1\le 2(2^d-2)$.  Continuing
our journey along $\Hdk$, the next largest filled dyadic cube must belong
to younger family $\mathcal Q_{k_2}$, ($k_2>k_1$). Denote by $Q_{1}^{k_2}$, \ldots,
$Q_{m_2}^{k_2}$ the cubes in $\mathcal Q_{k_2}$ the curve $\Hdk$ fills
consecutively. Their total amount satisfies $m_2\le 2^d-1$. Let us continue
in this manner inductively. We obtain the sequence of families $\mathcal Q_{k_1}$, \dots, 
$\mathcal Q_{k_r}$ and the sequences of dyadic cubes
$\{ Q_1^{k_1},\dots,Q_{m_1}^{k_1}\}\subseteq \mathcal Q_{k_1}$,
\dots, $\{Q_1^{k_r},\dots Q_{m_r}^{k_r}\}\subseteq \mathcal Q_{k_r}$.
Except for the $k_1$-th family, all such sequences are no longer than $2^d-1$,
i.e., $m_l\le 2^d-1$, ($2\le l\le r$). See Figure~\ref{fig2} for illustration.
\begin{figure}[ht]
\begin{tikzpicture}[x=2.3cm,y=2.3cm]
	\newcommand*{\Size}{1}
	\newcommand*{\Shift}{0.25}
	\def\zn{\tiny$\bullet$}

	\coordinate (S) at (\Shift,0);
	\coordinate (R) at (\Size,0);
	\coordinate (T) at (0,\Size);
	\coordinate (U) at ($ (R) + (T) $);
	\foreach \y in {R,T,U}
		\foreach \x in {2,4,8,16}
			\coordinate (\y\x) at ($ 1/\x*(\y) $);
	\coordinate (L1) at (0,0);
	\coordinate (P1) at ($ (R) + (S) $);
	\coordinate (L2) at ($ (P1) + (R) $);
	\coordinate (P2) at ($ (L2) + (R2) + (S) $);
	\coordinate (L3) at ($ (P2) + (R2)$);
	\coordinate (P3) at ($ (L3) + (R4) + (S) $);
	\coordinate (L4) at ($ (P3) + (R4) + 2*(S) + (R8)$);
	\coordinate (P4) at ($ (L4) + (R8) + (S) $);

	\foreach \y in {L,P} {
		\coordinate (u) at (U);
		\foreach \x in {1,...,4} {
			\draw (\y\x) rectangle ++(u);
			\coordinate (u) at ($ 1/2*(u) $);
			\coordinate (I\y\x) at ($ (\y\x) + (u) $);
			\node at (I\y\x){\zn};
		}
	}
	\draw (IL1) -- ($(T2)+(R)$);
	\draw ($(P1)+(T2)$) -- (IP1)
		-- ($(L2)+(T4)$) -- (IL2)
		-- ($(L2)+(R2)+(T4)$);
	\draw ($(P2)+(T4)$) -- (IP2)
		-- ($(L3)+(T8)$) -- (IL3)
		-- ($(L3)+(R4)+(T8)$);
	\draw ($(P3)+(T8)$) -- (IP3) -- ($(P3)+(R4)+(T16)$);
	\draw ($(L4)+(T16)$) -- (IL4) -- ($(L4)+(T16)+(R8)$);
	\draw ($(P4)+(T16)$) -- (IP4);

	\foreach \x in {1,...,4}
		\node at ($1/2*(IL\x)+1/2*(IP\x)$) {\small\dots};
	\node at ($(P3)+(R4)+1/2*(S)+(T16)$) {\small\dots};
	\node at ($(L4)-1/2*(S)+(T16)$) {\small\dots};

	\coordinate (r) at (R);
	\foreach \x in {1,...,3}{
		\coordinate (r) at ($1/2*(r)$);
		\node[below] at ($(L\x)+(r)$) {$Q_1^{k_\x}$};
		\node[below] at ($(P\x)+(r)$) {$Q_{m_\x}^{k_\x}$};
	}
	\node[below] at ($(L4)+(R16)$) {$Q_1^{k_r}$};
	\node[below] at ($(P4)+(R16)$) {$Q_{m_r}^{k_r}$};

	\node[above right, xshift=-1.5mm, yshift=-0.5mm] at (IL1) {$x_1^{k_1}$};
	\node[above right, xshift=-1.5mm, yshift=-0.5mm] at (IP1) {$x_{m_1}^{k_1}$};
	\node[above right, xshift=-1.5mm, yshift=-0.5mm] at (IL2) {$x_1^{k_2}$};
	\node[above right, xshift=-1.5mm, yshift=-0.5mm] at (IP2) {$x_{m_2}^{k_2}$};
\end{tikzpicture}
\caption{Labeling of dyadic cubes in $\Oij$}
\label{fig2}
\end{figure}
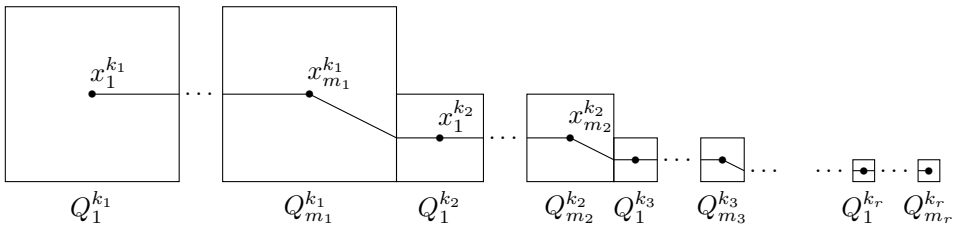

Next, we construct a John curve for a given $x\in\Oij$. We may assume that
$x\in Q_{m_r}^{k_r}$ since otherwise we can stop the process of the labeling
cubes as we reach the first dyadic cube containing $x$.
Denote by $x_1^{k_l},\dots, x_{m_l}^{k_l}$ the central points of the cubes
$Q_1^{k_l}, \dots,Q_{m_l}^{k_l}$, ($1\le l\le r$), respectively.
The image of the curve $\gamma\colon[0,1]\to\Oij$ is now defined
as a polyline as follows.
We choose $x$, the starting point, and we connect it to
the center $x_{m_r}^{k_r}$, then we join all the points
$x_\xi^{k_r}$, ($1\le\xi\le m_r$), in the reverse order.
From $x_1^{k_r}$ we go to the point $x_{m_{r-1}}^{k_{r-1}}$ through
the center of the face of $Q_{1}^{k_r}$ which
neighbourhoods with $Q_{m_{r-1}}^{k_{r-1}}$.
Denote this point by $y^{k_r}$ and similarly all the others by
$y^{k_2},\dots,y^{k_{r-1}}$.
We then follow this manner until we finally reach the point $x_1^{k_1}$,
the central point of the John domain $\Oij$. See Figure~\ref{fig2}
again.

Let now $\gamma$ be parametrized arbitrarily and choose any $t\in[0,1]$.
We distinguish two cases. If $\gamma(t)$ lays between $x$ and $x_{m_r}^{k_r}$
then \eqref{defjohn} holds with $C_J=C_J(Q)$ trivially.
In the remaining cases, assume that $\gamma(t)\in Q_{m}^{k_l}$
for some $1\le l\le r$ and some $1\le m\le m_l$.
We have, by the construction,
\begin{align*}
	|x-\gamma(t)|
		&\le
			\sum_{\lambda=l+1}^{r} \sum_{\xi=1}^{m_\lambda-1}
				|x_\xi^{k_\lambda} - x_{\xi+1}^{k_\lambda}|
			+ \sum_{\xi=m}^{m_l-1} | x_{\xi}^{k_l} - x_{\xi+1}^{k_l} |
				\cr
		&\quad 
			+ \sum_{\lambda=l+1}^{r} |x_1^{k_{\lambda}} - y^{k_\lambda} |
			+ \sum_{\lambda=l+1}^{r}
				|y^{k_\lambda} - x_{m_{\lambda-1}}^{k_{\lambda-1}} |
			+ | x_m^{k_l} - \gamma(t)|
			\cr
		&\le
			\sum_{\lambda=l+1}^{r} (2^d-1) 2^{-k_\lambda}
			+ (2^d-1) 2^{-k_l}
			\cr
		&\quad
			+ \sum_{\lambda=l+1}^{r} 2^{-k_\lambda-1}
			+ \sum_{\lambda=l+1}^{r} \sqrt{d} 2^{-k_{\lambda-1}}
			+ \sqrt{d} 2^{-k_l-1}
			\cr
		&\le c(d)\, 2^{-k_l}
\end{align*}
and
\begin{equation*}
	\dist\bigl(\gamma(t),\partial\Oij\bigr) \ge 2^{-k_l-2}.
\end{equation*}
Therefore, by the definition of the John domain, $C_J(\Oij)\le 4c$.
\end{proof}

\begin{proof}[Proof of Lemma~\ref{lemm:WBern}]
It is enough to show that for every
$n$-dimensional subspace $E$ of $V_0^1L^{d,1}(Q)$
satisfying
\begin{equation} \label{lefdas}
	\|u\|_\infty \ge \varrho \|\nabla u\|_{d,1}
		\quad\text{for $u\in E$},
\end{equation}
there is the inequality
\begin{equation} \label{lefdst}
	\varrho \le c n^{-\frac{1}{d}},
\end{equation}
where $c=c(d)$ is the absolute constant.

We proceed similarly as in the proof of Lemma~\ref{lemm:VBern}.
We may assume that the all functions in $E$ are continuous and, since
$E$ is of finite dimension, they are uniformly continuous.
Formally written, to any given $\varepsilon>0$,
there is a $\delta>0$ such that
\begin{equation*}
	|u(x) - u(y)|\le \varepsilon \|u\|_\infty
		\quad\text{whenever $|x-y|<\delta$ and $u\in E$}.
\end{equation*}
Let $\mathcal Q_k$ be the family of dyadic cubes as in \eqref{dicubes}
where $k\in\N$ is chosen sufficiently large to ensure that
the cubes in $\mathcal Q_k$ have diameter less than $\delta$.
Denote $N=2^{dk}$, the total number of the cubes in $\mathcal Q_k$.
We may also assume that $N>n$.
We label the cubes along the $k$-th approximation of Hilbert
curve as is in Lemma~\ref{lemm:John} by the labeling map
$Q\colon\{1,2,\dots,N\}\to\mathcal Q_k$. The center point of
a cube $Q_l$ will be denoted by $x_l$.

Let us define the mapping $\Phi\colon E\to \ell^\infty_N$ by
\begin{equation*}
	(\Phi u)_l = u(x_l)
		\quad\text{for $1\le l\le N$}.
\end{equation*}
Function $\Phi$ is well defined, since $E$ consists of continuous
functions.  Clearly $\|\Phi u\|_\infty \le \|u\|_\infty$ and also
$(1-\varepsilon) \|u\|_{\infty} \le \|\Phi u\|_\infty$ by the same
procedure as in \eqref{Ephibound} of Proposition~\ref{prop:CLL}. Thus
$\Phi$ is an isomorphism from $E$ onto a $n$-dimensional subspace of
$\ell^\infty_N$. On applying Zigzag theorem \citep[Theorem 1]{Cha:09},
there is an element $v\in E$ with $\|\Phi v\|_\infty=1$ and a
subsequence of $\{1,2,\dots,N\}$ of length $n$ satisfying
\begin{equation*}
	v(x_{l_j}) = (-1)^j
		\quad\text{for $1\le j\le n$}.
\end{equation*}
Denote $\Omega_j = \Omega_{l_j,l_{j+1}}$ using the definition of
the latter symbol given in \eqref{Oij}.
We have
\begin{align}
	\begin{split} \label{apr1}
	2(n-1) &= \sum_{j=1}^{n-1} | v(x_{l_j}) - v(x_{l_{j+1}}) |
			\cr
		&\le \sum_{j=1}^{n-1} C_J(\Omega_j) \|\nabla u \chi_{\Omega_j}\|_{d,1}
			\cr
		&\le
			\biggl(\,
			\sum_{j=1}^{n-1} C_J(\Omega_j)^{d'}
			\biggr)^{\frac{1}{d'}}
			\biggl(\,
				\sum_{j=1}^{n-1} \|\nabla v \chi_{\Omega_j}\|_{d,1}^d
			\biggr)^{\frac{1}{d}}
			\cr
		&\le C (n-1)^\frac{1}{d'} \|\nabla v\|_{d,1},
		\end{split}
\end{align}
where, in the first term, we used Lemma~\ref{lemm:John}
and, in the second term, we used the summation property of
the Lorentz norm \eqref{Lorsum}. Note that the sets $\Omega_j$ are not disjoint,
however $\sum_j \chi_{\Omega_j} \le 2$ on $Q$ which only affects
the constant $C$.

Now, combining \eqref{apr1} and \eqref{lefdas}, we get
\begin{equation*}
	2(n-1)^{\frac{1}{d}}
		\le C \|\nabla v\|_{d,1}
		\le \frac{C}{\varrho}\, \|v\|_{\infty}
		\le \frac{C}{\varrho(1-\varepsilon)}
\end{equation*}
and the inequality \eqref{lefdst} follows by letting $\varepsilon\to 0^+$.
\end{proof}

\begin{proof}[Proof of Theorem~\ref{thm:ddim}]
The proof of \eqref{mb4} follows from Lemma~\ref{lemm:Wiso}
and Lemma~\ref{lemm:WBern}.
The relation \eqref{mb5} is a consequence of the non-compactness
of the embedding in question.
\end{proof}

\begin{proof}[Proof of Corollary~\ref{cor:WXC}]
Let $s_n$ represent Isomorphism or Bernstein numbers. We have
\begin{equation*}
	 cn^{-1/d}
	 	\le i_n(V_0^1X \embed\C)
	 	\le s_n(V_0^1X\embed C)
		\le s_n(V_0^1L^{d,1}\embed\C)
		\le cn^{-1/d},
\end{equation*}
where the first inequality follows by Lemma~\ref{lemm:Wiso},
the second by the ideal property (S3) of $s$-numbers
and the last one is due to Theorem~\ref{thm:ddim}.
\end{proof}

\newcommand{\etalchar}[1]{$^{#1}$}

\end{document}